\newcommand{\N}{\mathbb{N}}
\newcommand{\Z}{\mathbb{Z}}
\newcommand{\Q}{\mathbb{Q}}
\newcommand{\F}{\mathbb{F}}
\newcommand{\K}{\mathcal{K}}
\renewcommand{\k}{\mathbbm{k}}
\newcommand{\Groth}{{R}}
\newcommand{\boldx}{{\mathbf{x}}}
\newcommand{\bolda}{{\mathbf{a}}}
\newcommand{\boldb}{{\mathbf{b}}}
\newcommand{\boldc}{{\mathbf{c}}}
\newcommand{\boldt}{{\mathbf{t}}}
\newcommand{\supp}{{\mathrm{supp}}}
\newcommand{\opp}{{\mathrm{opp}}}
\newcommand{\Tor}{{\mathrm{Tor}}}
\newcommand{\Aut}{{\mathrm{Aut}}}
\newcommand{\Hilb}{{\mathrm{Hilb}}}
\newcommand{\Sd}{{\mathrm{Sd}}}
\newcommand{\Des}{{\mathrm{Des}}}
\newcommand{\maj}{{\mathrm{maj}}}
\newcommand{\sgn}{{\mathrm{sgn}}}
\newcommand{\filt}{{\mathrm{filt}}}
\newcommand{\depth}{{\mathrm{depth}}}
\newcommand{\equivariant}{{\mathrm{eq}}}
\newtheorem*{main-theorem}{Main Theorem}
\newtheorem{thm}{Theorem}[section]
\newtheorem{prop}[thm]{Proposition}
\newtheorem{cor}[thm]{Corollary}
\newtheorem{conjecture}[thm]{Conjecture}
\newtheorem{question}[thm]{Question}
\newtheorem{defn}[thm]{Definition}
\newtheorem{example}[thm]{Example}
\newtheorem{remark}[thm]{Remark}
\begin{document}

\title[Colorful Hochster formula and universal parameters for face rings]{A colorful Hochster formula and universal parameters for face rings}

\author{Ashleigh Adams}
\email{adams869@umn.edu}

\author{Victor Reiner}
\email{reiner@umn.edu}
\address{School of Mathematics\\University of Minnesota\\Minneapolis, MN 55455}

\thanks{Work of first, second authors supported by NSF Graduate Research Fellowship
and NSF grant DMS-1601961, respectively}
\keywords{Hochster formula, Stanley-Reisner, simplicial poset, depth, balanced, symmetry}
\subjclass{13F55,13F50,13D02}

\begin{abstract}
  This paper has two related parts.  The first generalizes Hochster's formula on resolutions of Stanley-Reisner rings to a colorful version, applicable to any proper vertex-coloring of a simplicial complex.  The second part examines a universal system of parameters for Stanley-Reisner rings of simplicial complexes, and more generally, face rings of simplicial posets.  These parameters have good properties, including being fixed under symmetries, and detecting depth of the face ring.  Moreover, when resolving the face ring over these parameters, the shape is predicted, conjecturally, by the colorful Hochster formula.
\end{abstract}

\maketitle


\section{Introduction}
\label{intro-section}

This paper has two closely related parts, concerned with resolutions of Stanley-Reisner rings of simplicial complexes and face rings of simplicial posets as defined by Stanley in
\cite{Stanley-fvec-hvec}.

\subsection{Part 1. Stanley-Reisner rings}
The first part deals with the {\it Stanley-Reisner ring} $k[\Delta]$ for an abstract simplicial complex $\Delta$ on vertex set $V=[n]:=\{1,2,\ldots,n\}$. Recall that 
$$
\k[\Delta]:=\k[x_1,\ldots,x_n]/I_\Delta
$$
where the ideal $I_\Delta$ is
the $\k$-linear span of all monomials not supported on a face of $\Delta$.  

Assume that one is given a 
map $\kappa: V \rightarrow [d]$ which
is a {\it proper vertex $d$-coloring} of $\Delta$ in the sense that every edge $\{i,i'\}$ of $\Delta$ has $\kappa(i) \neq \kappa(i')$.
Section~\ref{colorful-Hochster-section} below discusses how this endows $\k[\Delta]$ with an $\N^d$-multigrading,
in which $\deg(x_i)$ is the standard basis vector $\epsilon_{\kappa(i)}$ in $\N^d$.  It is also shown there that
$\k[\Delta]$, is a finitely-generated $\N^d$-graded module over the polynomial ring $A:=\k[z_1,\ldots,z_d]$
via a ring map
$$
\begin{array}{rcl}
A &\longrightarrow &\k[\Delta]\\
z_j &\longmapsto &\gamma_j:=\sum_{i \in \kappa^{-1}(j)} x_i \text{ for }j=1,2,\ldots,d.
\end{array}
$$
The shape of the minimal free resolution of $\k[\Delta]$ as an
$A$-module is described by our first main result,
a {\it colorful Hochster formula} (Theorem~\ref{colorful-Hochster-formula}),
generalizing a celebrated formula of
Hochster \cite[Thm.~5.1]{Hochster} for the case $d=n$ with trivial coloring $\kappa$
assigning each vertex a different color.
Our formula asserts that,
for $\boldb$ in $\N^d$,
the $\boldb$-multigraded component of
$\Tor_*^A(\k[\Delta],\k)$ vanishes unless $\boldb$ lies in
$\{0,1\}^d$, so $\boldb=\sum_{j \in S} \epsilon_j$ for a subset $S \subseteq [d]$, in which case
$$
\Tor_m^A(\k[\Delta],\k)_{\boldb} \cong \tilde{H}^{\#S-m-1}(\Delta|_S,\k).
$$
Here $\tilde{H}^*(-,\k)$ denotes reduced simplicial cohomology with
coefficients in $\k$, and $\Delta|_S$ is the {\it $S$-color-selected
subcomplex} of $\Delta$, consisting of its simplices whose vertices all have $\kappa$-coloring lying in $S$.

\subsection{Part 2. Face rings and universal parameters}
The second part of this paper connects the colorful Hochster formula with our original motivation: to better understand the {\it face rings} associated by Stanley to
what he called {\it simplicial posets}, along with their symmetries.  These are
posets having a unique bottom
element in which all lower intervals are isomorphic to Boolean algebras.  Each simplicial poset $P$ is 
the face poset of an associated regular CW-complex
$\Delta$, generalizing an abstract simplicial complex.  Stanley associated to each of them a {\it face ring}
$\k[\Delta]=S/J_\Delta$ 
generalizing the Stanley-Reisner ring; see 
Section~\ref{simplicial-poset-section} below.  Here $S$ is a polynomial ring having a variable $y_F$ for each non-empty face $F$ of $\Delta$ (with convention that the empty face $\varnothing$ has $y_\varnothing:=1$), while $J_\Delta$ is the ideal generated by two kinds of
quadratic relations:
one sets $y_F y_{F'}=0$ in $\k[\Delta]$ for faces $F,F'$ having no face $G$ containing both of them, and otherwise
$$
y_F y_{F'} =  y_{F \cap F'} \sum_G y_G
$$
where the sum
is over faces $G$ in $\Delta$ which are minimal among those containing both $F,F'$.
When $\Delta$ is
actually a simplicial complex, the above face ring
is isomorphic to the usual Stanley-Reisner ring for $\Delta$, via the map sending $y_F \mapsto \prod_{i \in F} x_i$ to the product of variables corresponding to vertices of $F$.

We were originally motivated to study the face ring $\k[\Delta]$ for any
such complex $\Delta$ as a graded representation of the group of (cellular) automorphisms of $\Delta$.  A helpful feature in this regard is a certain {\it universal system of parameters}, discussed in
Section~\ref{universal-parameters-section}, that has appeared in work of De Concini, Eisenbud and Procesi on algebras with straightening laws \cite{Hodge-algebras}, work of Garsia and Stanton on invariant theory of permutation groups \cite{Garsia-Stanton}, work of D.E. Smith on sheaves on posets \cite{Smith}, and most recently work of Herzog and Moradi \cite{HerzogMoradi}.   The face ring $\k[\Delta]$ has Krull dimension $d$ when $\Delta$ has topological dimension $d-1$, and the sequence of elements $\Theta=(\theta_1,\ldots,\theta_d)$ defined by 
$$
\theta_j:=\sum_{\substack{\text{faces }F \in \Delta:\\ \dim(F)=j-1}} y_F
$$
turn out to give a universal system of parameters, fixed pointwise by any
cellular automorphism of $\Delta$.  

Generalizing work of D.E. Smith, Theorem~\ref{depth-sensitivity-theorem} will show that these parameters $\Theta$
detect {\it depth} of $\k[\Delta]$:
$$
\depth \,  \k[\Delta] =\max\{\delta: (\theta_1,\theta_2,\ldots,\theta_\delta) \text{ forms a regular sequence on }\k[\Delta]\}.
$$

We then go on to conjecture (Conjecture~\ref{canonical-resolution})
the shape of the $\N$-graded minimal resolution of the face ring $\k[\Delta]$ over the universal parameter ring $\k[\Theta]=\k[\theta_1,\ldots,\theta_d]$, and connect it to the colorful Hochster formula from Part 1.  Because $\k[\Delta]$ is an {\it algebra with straightening law} \cite{Hodge-algebras} over the face poset of $\Delta$,
it may be regarded as a {\it Gr\"obner deformation} of
the Stanley-Reisner ring $\k[\Sd \Delta]$ for the {\it barycentric subdivision} $\Sd \Delta$.  This subdivision has a
canonical proper vertex $d$-coloring $\kappa$ which assigns color $j$ to the barycenter vertex of each $(j-1)$-dimensional face.  Therefore, as in the first part of
this paper, $\k[\Sd \Delta]$ has a minimal free 
resolution over a ``colorful" parameter ring 
$A=\k[\Gamma]=\k[\gamma_1,\ldots,\gamma_d]$, and the $\N^d$-graded
resolution Betti numbers are predicted by the colorful Hochster formula Theorem~\ref{colorful-Hochster-formula}. 
The universal parameter ring $\k[\Theta]$ for 
$\k[\Delta]$ maps to this colorful parameter ring $\k[\Gamma]$ for $\k[\Sd \Delta]$ under the Gr\"obner deformation. Conjecture~\ref{canonical-resolution} asserts that, after specializing the $\N^d$-multigrading of $\k[\Sd \Delta]$ via the map
$
\N^d \rightarrow \N
$ sending
$
\epsilon_j \mapsto j,
$
the $\N$-graded Betti numbers are equal:
\begin{equation}
    \label{conjecture-paraphrased}
\begin{aligned}
\Tor_m^{\k[\Theta]}(\k[\Delta],\k)_j 
\cong
\Tor_m^{\k[\Gamma]}(\k[\Sd \Delta],\k)_j
\cong
\bigoplus_{\substack{S \subseteq [d]:\\ j=\sum_{s \in S}s} }
\tilde{H}^{\#S-m-1}((\Sd \Delta)|_S,\k).
\end{aligned}
\end{equation}
In fact, it was the form of the right side of \eqref{conjecture-paraphrased} in examples that
led us to the formulation of Theorem~\ref{colorful-Hochster-formula}.

\begin{remark} \rm 
The authors thank Patricia Klein for pointing out that, since $\k[\Sd \Delta]$ is a {\it square-free Gr\"obner deformation} of $\k[\Delta]$, Conjecture~\ref{canonical-resolution} is in the spirit of a conjecture of Herzog, proven by Conca and Varbaro \cite{Conca-Varbaro}, concerning preservation of
extremal Betti numbers under square-free Gr\"obner deformations. 
It is unclear why {\it all Betti numbers} would be preserved in this case.
\end{remark}

The rest of the paper is structured as follows.  

Section~\ref{Stanley-Reisner-section} reviews material on Stanley-Reisner rings, introduces their proper vertex-colorings, and discusses the group of color-preserving symmetries, as well as Hilbert series, $f$-vectors and $h$-vectors that take this symmetry into account.  It also discusses {\it order complexes of posets}, which naturally come with a proper vertex-coloring, including some of our motivating examples with large groups of symmetries.

Section~\ref{colorful-Hochster-section} states and proves the colorful Hochster formula, Theorem~\ref{colorful-Hochster-formula}.

Section~\ref{simplicial-poset-section} reviews simplicial posets and their face rings, including their relationship to algebras with straightening laws, and Gr\"obner deformations.

Section~\ref{universal-parameters-section} explains why the universal parameters
$\Theta$ really are a system of parameters for the face ring $\k[\Delta]$,
and proves that they detect its depth in Theorem~\ref{depth-sensitivity-theorem}.

Section~\ref{main-conjecture-section} states Conjecture~\ref{canonical-resolution} on the $\k[\Theta]$-resolution
of $\k[\Delta]$, and indicates some evidence in its favor.

\section{Stanley-Reisner review and set-up}
\label{Stanley-Reisner-section}

\subsection{Stanley-Reisner rings}

Let $\Delta$ be an {\it abstract simplicial complex} on a
finite vertex set 
$$
V=[n]:=\{1,2,\ldots,n\},
$$
meaning that $\Delta$ is a collection of subsets $F \subset [n]$ called {\it faces}, with the property that whenever $F$ lies in $\Delta$, then any subset $F' \subseteq F$ also lies in $\Delta$.  

A face $F$ in $\Delta$ has {\it dimension} $\dim(F):=\#F-1$.  Zero- and one-dimensional faces are called {\it vertices} and {\it edges}, respectively.
The dimension  $\dim(\Delta):=\max\{\dim(F): F \in \Delta\}$.
Say that $\Delta$ is {\it pure} if all
of its maximal faces have the dimension, namely $\dim(\Delta)$.

Fix a field $\k$, and
let $\k[\boldx]:=\k[x_1,\ldots,x_n]$ be the polynomial ring
in variables indexed by the vertices $V=[n]$.  For a vector $\bolda=(a_1,\ldots,a_n)$ in $\N^n$, we use multi-index notation
for monomials 
$
\boldx^{\bolda}:=x_1^{a_1} \cdots x_n^{a_n}.
$
Letting $e_1,\ldots,e_n$ be standard basis vectors in $\Z^n$,
the square-free monomial indexed by $S \subseteq [n]$ is
$$
\boldx^S:=\prod_{i \in S} x_i = \boldx^{\sum_{i \in S} e_i}.
$$

\begin{defn} \rm
For a simplicial complex $\Delta$ on vertices $V=[n]$,
the {\it Stanley-Reisner ring} $\k[\Delta]$ is 
$$
\k[\Delta]:=\k[\boldx]/I_\Delta
$$
where the {\it Stanley-Reisner ideal} $I_\Delta$ is generated by all square-free monomials
$\boldx^S$ with $S$ {\it not} in $\Delta$.
\end{defn}

\noindent
It is easily seen that $\k[\Delta]$
has $\k$-basis the monomials $\boldx^\bolda$ with
{\it support set} 
$
\supp(\bolda):=\{i: a_i >0\}
$
in $\Delta$.

\subsection{Vertex-colorings}

\begin{defn} \rm
A {\it (proper, vertex-) $d$-coloring} of $\Delta$ is a map
$V \overset{\kappa}{\rightarrow} [d]$ such that the vertices
in any face $F$ in $\Delta$ have $\#F$ distinct colors, that is,
$\#\kappa(F)=\#F$.  Equivalently, $\kappa(i) \neq \kappa(j)$
for all edges $\{i,j\}$ in $\Delta$.
\end{defn}

There are two extreme cases of such colorings:
\begin{itemize}
    \item The {\it trivial} $n$-coloring $\kappa$ is the identity map $V=[n] \rightarrow [n]$ assigning every vertex its own color.
    \item A {\it balanced} $d$-coloring is a proper coloring $\kappa$ with $d=\dim(\Delta)+1$, which may or may not exist;  when one does exist then $\Delta$ is called a {\it balanced} simplicial complex.  
\end{itemize}

Given a $d$-coloring $\kappa$ of $\Delta$, one can endow
$\k[\boldx]$ with an $\N^d$-multigrading in which $\deg(x_i):=\epsilon_{\kappa(i)}$, where $\epsilon_j$ is the $j^{th}$ standard
basis vector in $\Z^d$.  One can check that 
the Stanley-Reisner ideal $I_\Delta$ is homogeneous with respect to this 
$\N^d$-grading, and hence this induces 
an $\N^d$-multigrading on $\k[\Delta]=\k[\boldx]/I_\Delta$.

\subsection{Symmetries}

Because our motivation was originally representation-theoretic\footnote{We hope the representation-theoretic baggage does not greatly annoy readers interested solely in
Stanley-Reisner rings. Such readers can safely ignore discussions involving
the phrases {\it symmetry}, {\it equivariant}, and {\it Grothendieck ring}.},
we wish to incorporate the action on all of these objects of
a subgroup of the simplicial automorphism group $\Aut(\Delta)$,
namely the subgroup of {\it color-preserving automorphisms}
$$
\Aut_\kappa(\Delta):=
\{ g \in \Aut(\Delta): \kappa(g(i))=\kappa(i)\text{ for all }i\text{ in }V=[n]\}.
$$
This group acts on $\k[\Delta]$ preserving the $\N^d$-multigrading.  Thus, for
each fixed multidegree $\boldb$ in $\N^d$, the {\it
$\boldb$-homogeneous component} of $\k[\Delta]$, denoted $\k[\Delta]_\boldb$, is not only a $\k$-vector
space, but also a representation of the group $\Aut_\kappa(\Delta)$, or a module over the {\it group algebra} $\k[\Aut_\kappa(\Delta)]$. To keep track of these
representations with fields $\k$ of any characteristic, it is 
convenient to introduce a certain {\it Grothendieck ring}.

\begin{defn} \rm
For a finite group $G$ (such as any subgroup $G$ of $\Aut_\kappa(\Delta)$),
define the {\it Grothendieck ring} $\Groth_{\k}(G)$ of virtual $\k G$-modules first as an abelian group: $\Groth_{\k}(G)$ 
is the quotient of the free $\Z$-module
having basis elements $[U]$ for each $\k G$-module $U$,
in which one mods out by the relations
\begin{itemize}
    \item $[U] = [U']$ if $U \cong U'$ as $\k G$-modules, and
    \item $U_2=U_1+U_3$
when $0 \rightarrow U_1 \rightarrow U_2 \rightarrow U_3 \rightarrow 0$
is a short exact sequence of $\k G$-modules.
\end{itemize}
Then ring multiplication in $\Groth_{\k}(G)$ is induced from $[U] \cdot [U']:=[U \otimes U']$, which descends to the quotient.
\end{defn}

The Jordan-H\"older theorem implies that $\Groth_{\k}(G)$
is a free $\Z$-module, with a $\Z$-basis given by the classes
$\{[U_1],\ldots,[U_t]\}$ of the inequivalent simple $\k G$-modules $U_i$.
Among these is the class of the {\it trivial} one-dimensional module $\k$,
on which every $g$ acts as the identity;  the class of this trivial module is the multiplicative
identity in $\Groth_{\k}(G)$, and will therefore be denoted by $1$.

Equivariant assertions that involve $\Groth_{\k}(G)$
can always be specialized to non-equivariant ones that ignore the $\k G$-module structure, by applying the {\it dimension homomorphism},
a ring map defined as follows:
\begin{equation}
\label{dimension-homomorphism}
\begin{array}{rcl}
\Groth_{\k}(G) &\overset{\dim}{\longrightarrow}& \Z \\[0in]
 [U] &\longmapsto& \dim_\k U.
\end{array}
\end{equation}

\subsection{Hilbert series and equivariant Hilbert series}

Let $\Delta$ be a simplicial complex $\Delta$ with a proper $d$-coloring $\kappa$, and $G$ a subgroup of $\Aut_\kappa(\Delta))$.
One can then keep track of the {\it $\N^d$-graded Hilbert
series} lying in 
$\Z[[\boldt]]:=\Z[[t_1,\ldots,t_d]]$,
and more generally its {\it equivariant Hilbert series} lying in
$\Groth_{\k}(G)[[\boldt]]$:
$$
\begin{aligned}
\Hilb(\k[\Delta],\boldt)&
  :=\sum_{\boldb \in \N^d} \dim_\k \k[\Delta]_\boldb \cdot \boldt^\boldb \\
  \Hilb_{\equivariant}(\k[\Delta],\boldt)&
  :=\sum_{\boldb \in \N^d} [ \k[\Delta]_\boldb ] \cdot \boldt^\boldb
\end{aligned}
$$

To write down formulas for these Hilbert series,
we introduce the following notions.

\begin{defn} \rm
For any proper $d$-coloring $\kappa$ of $\Delta$, define the 
{\it $\kappa$-flag $f$-vector} $(f^\kappa_S)_{S \subseteq [d]}$ with entries
$$
f^\kappa_S(\Delta)= \#\{F\in \Delta: \kappa(F)=S \},
$$
and the {\it $\kappa$-flag $h$-vector} $(h^\kappa_S)_{S \subseteq [d]}$
with entries
$$
h^\kappa_S(\Delta):=\sum_{T: T \subseteq S} (-1)^{S \setminus T} f^\kappa_T(\Delta)
$$
or equivalently, via inclusion-exclusion
$$
f^\kappa_S(\Delta):=\sum_{T: T \subseteq S}  h^\kappa_T(\Delta).
$$
More generally, define $[f^\kappa_S(\Delta)]$ in  $\Groth_{\k}(G)$ to be the class of the $G$-permutation representation on the set 
$$
\{F\in \Delta: \kappa(F)=S \},
$$ 
or the sum of the coset representations for the stabilizer subgroups of orbit representatives
of this set. Then define the element $[ h^\kappa_S(\Delta)]$ as follows
(cf. Stanley \cite[\S 1]{Stanley-some-aspects}):
\begin{equation}
    \label{equivariant-h-definition}
[ h^\kappa_S(\Delta) ]
:=\sum_{T: T \subseteq S} (-1)^{\#S- \#T} [f^\kappa_S(\Delta)]
=(-1)^{\#S-1} \tilde{\chi}_\equivariant(\Delta|_S)
\end{equation}
where $\tilde{\chi}_\equivariant(\Delta|_S)$ is the {\it (equivariant) reduced Euler characteristic}
\begin{equation}
\label{Euler-Poincare}
\tilde{\chi}_\equivariant(\Delta|_S)
=\sum_{i \geq -1} (-1)^i [\tilde{C}^i(\Delta|_S,\k)]
= \sum_{i \geq -1} (-1)^i [\tilde{H}^i(\Delta|_S,\k)].
\end{equation}
for the {\it color-selected subcomplex} 
\begin{equation}
    \label{color-selected-subcomplex-defn}
\Delta|_S:=\{ F \in \Delta: \kappa(F) \subseteq S\}.
\end{equation}
Of course,
applying the dimension homomorphism \eqref{dimension-homomorphism} to
$[ f^\kappa_S(\Delta) ], [ h^\kappa_S(\Delta) ]$
recovers their non-equivariant versions, that is,
$
f^\kappa_S(\Delta)=\dim[ f^\kappa_S(\Delta)]
$
and
$
h^\kappa_S(\Delta)=\dim[ h^\kappa_S(\Delta)].
$
\end{defn}

\noindent
The next proposition generalizes formulas of
Stanley \cite[p. 54]{Stanley-littlegreenbook}, and Garsia and Stanton \cite[eqn. (0.8)]{Garsia-Stanton}.
\begin{prop}
\label{Hilbert-series-calculations}
Given any $d$-coloring $\kappa$ of a simplicial complex $\Delta$,
one has the following expressions for the $\N^d$-graded equivariant Hilbert
series
\begin{equation}
\label{equivariant-Hilb-expressions}
\begin{aligned}
\Hilb_{\equivariant}(\k[\Delta],\boldt)
  &=\sum_{S \subseteq [d]} \frac{[f^\kappa_S(\Delta)] \cdot \boldt^S}{\prod_{j \in S}(1-t_j)} 
  =\frac{1}{\prod_{j=1}^d (1-t_j)}
   \sum_{S \subseteq [d]} [h^\kappa_S(\Delta)] \cdot \boldt^S
\end{aligned}
\end{equation}
and non-equivariant versions
$$
\begin{aligned}
\Hilb(\k[\Delta],\boldt)
  &=\sum_{S \subseteq [d]} \frac{f^\kappa_S(\Delta) \cdot \boldt^S}{\prod_{j \in S}(1-t_j)} 
  =\frac{1}{\prod_{j=1}^d (1-t_j)}
   \sum_{S \subseteq [d]} h^\kappa_S(\Delta) \cdot \boldt^S
\end{aligned}
$$
\end{prop}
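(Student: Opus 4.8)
The plan is to read off the equivariant Hilbert series directly from the monomial basis of $\k[\Delta]$, and then obtain the non-equivariant formulas by applying the dimension homomorphism \eqref{dimension-homomorphism} coefficientwise; within each line, the passage from the ``$f$-form'' to the ``$h$-form'' is a single formal geometric-series manipulation that is literally the same in $\Groth_{\k}(G)[[\boldt]]$ as in $\Z[[\boldt]]$.

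First I would recall that $\k[\Delta]$ has $\k$-basis $\{\boldx^{\bolda}:\supp(\bolda)\in\Delta\}$, and analyze a fixed homogeneous component $\k[\Delta]_{\boldb}$ for $\boldb\in\N^d$. The crucial point — and the only place properness of $\kappa$ enters — is that if $\boldx^{\bolda}$ has $\supp(\bolda)=F$, then $\kappa|_F$ is injective, so $\kappa(F)=\supp(\boldb)$, and moreover $\bolda$ is \emph{forced} by $F$ and $\boldb$ via $a_i=b_{\kappa(i)}$. Hence, writing $S:=\supp(\boldb)$, the monomials of $\k[\Delta]_{\boldb}$ are in bijection with $\{F\in\Delta:\kappa(F)=S\}$; and since every $g\in G\subseteq\Aut_\kappa(\Delta)$ permutes the faces of $\Delta$ while preserving $\kappa$ (hence preserving multidegree), this bijection is $G$-equivariant. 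Therefore $[\k[\Delta]_{\boldb}]=[f^\kappa_S(\Delta)]$ in $\Groth_{\k}(G)$ whenever $\supp(\boldb)=S$.

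Summing over $\boldb\in\N^d$, I would group terms by $S=\supp(\boldb)$ and use $\sum_{\boldb:\,\supp(\boldb)=S}\boldt^{\boldb}=\prod_{j\in S}\frac{t_j}{1-t_j}=\frac{\boldt^S}{\prod_{j\in S}(1-t_j)}$ to get the first expression in \eqref{equivariant-Hilb-expressions}. For the second, I would clear denominators by writing $\frac{\boldt^S}{\prod_{j\in S}(1-t_j)}=\frac{\boldt^S\prod_{j\notin S}(1-t_j)}{\prod_{j=1}^d(1-t_j)}$, expand $\prod_{j\notin S}(1-t_j)=\sum_{U\subseteq[d]\setminus S}(-1)^{\#U}\boldt^U$, and re-index by $T=S\sqcup U$; the coefficient of $\boldt^T$ then collects to $\sum_{S\subseteq T}(-1)^{\#T-\#S}[f^\kappa_S(\Delta)]=[h^\kappa_T(\Delta)]$, which is exactly \eqref{equivariant-h-definition}. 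This step lives in the commutative ring $\Groth_{\k}(G)[[\boldt]]$ and is purely formal. Applying $\dim\colon\Groth_{\k}(G)\to\Z$, together with $f^\kappa_S(\Delta)=\dim[f^\kappa_S(\Delta)]$ and $h^\kappa_S(\Delta)=\dim[h^\kappa_S(\Delta)]$, then yields the non-equivariant versions.

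The one subtlety worth flagging is the equivariant bookkeeping in the middle step: the decomposition of $\k[\Delta]$ into monomials by support is \emph{not} a decomposition into $G$-submodules, but once the multidegree $\boldb$ is fixed the surviving monomials form a single $G$-permuted family indexed by $\{F:\kappa(F)=\supp(\boldb)\}$, so $\k[\Delta]_{\boldb}$ really is that permutation module. Everything else is the classical inclusion–exclusion between flag $f$- and $h$-vectors, unchanged from the non-equivariant computations of Stanley and of Garsia–Stanton.
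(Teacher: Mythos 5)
Your proof is correct and follows essentially the same route as the paper's: identify $\k[\Delta]_{\boldb}$ with the $G$-permutation module on $\{F\in\Delta:\kappa(F)=\supp(\boldb)\}$ (using properness to force $a_i=b_{\kappa(i)}$), sum the geometric series to get the $f$-form, and clear denominators via the inclusion--exclusion that defines $[h^\kappa_T]$ to reach the $h$-form. You spell out the equivariant identification of the multidegree component more explicitly than the paper's terse argument, which is a welcome clarification but not a different method.
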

\begin{proof}
It suffices to prove \eqref{equivariant-Hilb-expressions}, and apply the dimension homomorphism
\eqref{dimension-homomorphism} to deduce the non-equivariant versions.  The first equality in \eqref{equivariant-Hilb-expressions} comes from observing that a face $F \in \Delta$ with colors $\kappa(F)=S$ has
$$
\sum_{\substack{\text{monomials }m:\\ \supp(m)=F}} \boldt^{\deg_{\N^d}(m)} 
= \prod_{j \in S} (t_j+t_j^2+\cdots)
= \prod_{j \in S} \frac{t_j}{1-t_j}
= \frac{\boldt^S}{\prod_{j \in S}(1-t_j)}.
$$
The second equality in \eqref{equivariant-Hilb-expressions} puts the sum over the common denominator
$\prod_{j=1}^d (1-t_j)$ with this numerator:
$$
\sum_{S \subseteq [d]} [f^\kappa_S(\Delta)] \cdot \boldt^S \prod_{j \in [d] \setminus S}(1-t_j) 
 =\sum_{S \subseteq [d]} [f^\kappa_S(\Delta)] 
    \sum_{T: S \subseteq T \subseteq [d]}
             (-1)^{\#S-\#T} \boldt^T\\
 =\sum_{T\subseteq [d]}
               [h^\kappa_T(\Delta)] \cdot \boldt^T. \qedhere
$$
\end{proof}

\begin{example} 
\label{running-example-one}
\rm 
Consider this two-dimensional simplicial complex $\Delta$ on vertex set $V=[8]$:

\begin{center}
\begin{tikzpicture}[scale=0.6]
\pgfmathsetlengthmacro{\rad}{25pt};
\pgfmathsetlengthmacro{\vertsize}{\rad*0.1666};

\pgfmathsetmacro{\triangleopacity}{0.4};
\definecolor{dimtwocolor}{RGB}{140,140,200}
    
    \begin{scope}[shift={(-4.5,0)}]
        \node[] at (0,0){\Large $\Delta=$};
    \end{scope}
     
    \begin{scope}{[shift={(0,0)}]}
    
     	\coordinate (1) at (0,2.1);
		\coordinate (2) at (-1.9,-1.1);
		\coordinate (3) at (1.9,-1.1);
		
     	\coordinate (4) at (0,-1.1);
		\coordinate (5) at (.95,.5);
		\coordinate (6) at (-.95,.5);
		
		\coordinate (7) at (0,-2.5); 
		\coordinate (8) at (0,0); 

		\draw [very thick, draw=black, fill=dimtwocolor, fill opacity=\triangleopacity] (1) -- (6) -- (2) -- (4) -- (3) -- (5) -- cycle;

		\draw [very thick, draw=black, fill=dimtwocolor, fill opacity=\triangleopacity] (8) -- (1);		
		\draw [very thick, draw=black, fill=dimtwocolor, fill opacity=\triangleopacity] (8) -- (2);
		\draw [very thick, draw=black, fill=dimtwocolor, fill opacity=\triangleopacity] (8) -- (3);
		\draw [very thick, draw=black, fill=dimtwocolor, fill opacity=\triangleopacity] (8) -- (4);
		\draw [very thick, draw=black, fill=dimtwocolor, fill opacity=\triangleopacity] (8) -- (5);
		\draw [very thick, draw=black, fill=dimtwocolor, fill opacity=\triangleopacity] (8) -- (6);

		\draw [very thick, draw=black] (2) to 
		(7) to 
		(3);

		\draw [black, fill=black] (1) circle [radius=\vertsize];
		\draw [black, fill=black] (2) circle [radius=\vertsize];
		\draw [black, fill=black] (3) circle [radius=\vertsize];
		\draw [black, fill=black] (4) circle [radius=\vertsize];
		\draw [black, fill=black] (5) circle [radius=\vertsize];
		\draw [black, fill=black] (6) circle [radius=\vertsize];
		\draw [black, fill=black] (7) circle [radius=\vertsize];
		\draw [black, fill=black] (8) circle [radius=\vertsize];
		
		\node[yshift=2*\vertsize]                        at (1) {\small $1$};
		\node[xshift=-2*\vertsize]                       at (2) {\small $2$};		
		\node[xshift=2*\vertsize]                        at (3) {\small $3$};
		\node[yshift=-2*\vertsize]                       at (4) {\small $4$};
		\node[xshift=2*\vertsize]                        at (5) {\small $5$};		
		\node[xshift=-2*\vertsize]                       at (6) {\small $6$};
		\node[yshift=-2*\vertsize]                       at (7) {\small $7$};
		\node[yshift=-2*\vertsize,xshift=1.3*\vertsize]  at (8) {\small $8$};
    \end{scope}
\end{tikzpicture}
\end{center}

\noindent
Using the trivial $8$-coloring $\kappa$, the group $\Aut_\kappa(\Delta)$ is trivial,
and the $\N^8$-multigraded Hilbert series is
$$
\Hilb(\k[\Delta],\boldt)
=
1+\sum_{i=1}^8 \frac{t_i}{1-t_i}
+\sum_{
\substack{
ij \text{ in }\\ 
\left\{15,16,18,24,26,\right.
\\27,28,34,35,37,
\\ \left.38,48,58,,68\right\}
}}
\frac{t_i t_j}{(1-t_i)(1-t_j)}
+\sum_{
\substack{
ijk \text{ in }\\
\left\{158,168,248,\right.\\
\left.268,348,358\right\}
}}
\frac{t_i t_j t_k}{(1-t_i)(1-t_j)(1-t_k)}
$$
which specializes via $t_i=t$ to an $\N$-graded
Hilbert series in $\Z[[t]]:$
\begin{equation}
\label{N-graded-Hilb-example}
\begin{aligned}
\Hilb(\k[\Delta],t)
=1+\frac{8t}{1-t}+\frac{14t^2}{(1-t)^2}+
\frac{6t^3}{(1-t)^3} 
=\frac{1+5t+t^2-t^3}{(1-t)^3}
\end{aligned}
\end{equation}

On  the  other  hand,  $\Delta$  happens to have a  
proper  $3$-coloring  $\kappa: V \rightarrow [3]$:
$$
\begin{aligned}
1,2,3 & \mapsto 1,\\
4,5,6,7 & \mapsto 2,\\
8 & \mapsto 3.
\end{aligned}
$$
This $\kappa$ has one nontrivial color-preserving symmetry, $\sigma=(1)(4)(7)(8)(23)(56)$,
generating the two-element group $G=\Aut_\kappa(\Delta)=\{1,\sigma\}$.
Assuming that $\k$ does not have characteristic $2$, there
are exacty two simple $\k G$-modules, both one-dimensional: the trivial module $1$ and 
the nontrivial module in which $\sigma$ scales $\k$ by $-1$.  Denoting the class
of the nontrivial module by $\epsilon$, one can identify the
Grothendieck ring for $G$ as 
$
R_\k(G) \cong \Z[\epsilon]/(\epsilon^2-1).
$
One can then tabulate the $\kappa$-flag $f-$vector and $h-$vector entries, along with their equivariant generalizations,
as follows, using the fact that $G$-orbits of faces in $\Delta$ either have size one or two, and contribute either $1$ or $1+\epsilon$ to the equivariant $f-$vector entries:
\begin{center}
\begin{tabular}{|c|c|c|c|c|}\hline
$S$ & $f^\kappa_S$ &$h^\kappa_S$ &$[f^\kappa_S]$ &  $[h^\kappa_S]$ \\ \hline\hline
 $\varnothing$    &1  &1 &1 &1 \\\hline
 $\{1\}$    &$3$  &$2$ &$2+\epsilon$ &$1+\epsilon$ \\\hline
 $\{2\}$    &$4$  &$3$ &$3+\epsilon$ &$2+\epsilon$ \\\hline
 $\{3\}$    &$1$  &$0$ &$1$ &$0$ \\\hline
 $\{1,2\}$    &$8$  &$2$ &$4+4\epsilon$ &$2\epsilon$ \\\hline
 $\{1,3\}$    &$3$  &$0$ &$2+\epsilon$ &$0$ \\\hline
 $\{2,3\}$    &$3$  &$-1$ &$2+\epsilon$ &$-1$ \\\hline
 $\{1,2,3\}$    &$6$  &$-1$ &$3+3\epsilon$ &$-\epsilon$ \\\hline
\end{tabular}
\end{center}
For example, 
$[h^\kappa_{\{1,2\}}]=2\epsilon$ agrees with the subcomplex $\Delta_{\{1,2\}}$ being a graph with two independent $1$-cycles (or $1$-cocycles), both {\it reversing} orientation under the action of $\sigma$. On the other hand, $[h^\kappa_{\{2,3\}}]=-1$ because the subcomplex $\Delta_{\{2,3\}}$ is a graph with $\tilde{H}^1=0$ but $\tilde{H}^0=k$, where $\sigma$ fixes the $0$-cohomology class.

The $h^\kappa_S$ entries in the above table give,
via Proposition~\ref{Hilbert-series-calculations},
this $\N^3$-graded Hilbert series in $\Z[[t_1,t_2,t_3]]$:
$$
\Hilb(\k[\Delta],\boldt)
=\frac{1 + 2t_1 + 3t_2 + 2t_1 t_2- t_2 t_3 - t_1 t_2 t_3}{(1-t_1)(1-t_2)(1-t_3)}.
$$
Specializing $t_i=t$
again gives \eqref{N-graded-Hilb-example} above. The $[h^\kappa(S)]$ entries give 
this refinement in $R_\k(G)[[t_1,t_2,t_3]]$:
\begin{equation}
    \label{equivariant-multigraded-hilb-example}
\Hilb_{\equivariant}(\k[\Delta],\boldt)
=\frac{1 + (1+\epsilon) t_1 + (2+\epsilon) t_2 + 2\epsilon t_1 t_2- t_2 t_3 - \epsilon t_1 t_2 t_3}{(1-t_1)(1-t_2)(1-t_3)}.
\end{equation}
\end{example}

\subsection{Examples: Order complexes}

An important example of a balanced simplicial complex is the
order complex for a finite poset $P$, recalled here.

\begin{defn} \rm
Given a finite poset $P$, its {\it order complex} is
the simplicial complex $\Delta P$ with vertex set $V:=P$,
whose faces $F$ are the totally ordered subsets ({\it chains}) of $P$. 
\end{defn}

If the largest chain in $P$ has
$d$ elements, then $\Delta P$ has a proper $d$-coloring $V:=P \overset{\kappa}{\rightarrow} [d]$ defined by $\kappa(p)=\ell$ where $\ell$ is the number of elements in the
longest chain $p_1<p_2<\cdots<p_{\ell}:=p$ with top element $p$.  In this case, poset automorphisms of $P$
give rise to simplicial automorphisms of $\Delta P$,
and all such automorphisms respect this coloring $\kappa$,
so they lie in $\Aut_\kappa(\Delta P)$.

If the poset $P$ has all of its maximal chains of length $d$, then $\Delta P$ is a pure $(d-1)$-dimensional simplicial complex. 
The situation where $\Delta P$ is not only pure, but
also Cohen-Macaulay over $\k$ has been explored extensively since the work of Stanley \cite{Stanley-some-aspects}
on $[f^\kappa_S], [h^\kappa_S]$ in this setting\footnote{In \cite{Stanley-some-aspects}, representations of $\Aut_\kappa(\Delta)$ are over $\k={\mathbb C}$, and $[f^\kappa_S], [h^\kappa_S]$ are studied via their characters, called
$\alpha_S, \beta_S$ there.}.  In that situation,
because $\tilde{H}^i(\Delta|_S,\k)=0$
for $i \neq \# S-1$, the equivariant reduced Euler characteristic $\tilde{\chi}(\Delta|_S)$ has only one
nonvanishing term when computed as
in the right side of \eqref{Euler-Poincare}, 
simplifying the $\kappa$-flag $h$-vector:
\begin{equation}
\label{Cohen-Macaulay-poset-simplification}
[h^\kappa_S(\Delta)]=[\tilde{H}^{\#S-1}(\Delta|_S,\k)].
\end{equation}

In \cite{Stanley-some-aspects}, Stanley gave explicit irreducible decompositions for $[h^\kappa_S(\Delta)]$
in several interesting families of Cohen-Macaulay order
complexes $\Delta P$, some of which we discuss briefly here;  see \cite{Stanley-some-aspects}
for more details.

\begin{example} \rm
\label{Stanley-example-type-A}
The {\it Boolean algebra} $P=2^{[n]}$ is the poset of all subsets of $[n]$, ordered via inclusion.  Its order complex $\Delta P$ is Cohen-Macaulay over any field $\k$.  The symmetric group $S_n$ is the group of poset automorphisms of $P$, and hence a subgroup of $\Aut_\kappa(\Delta P)$.
When $\k$ has characteristic zero, the simple $\k[S_n]$-modules are indexed by {\it (number) partitions} $\lambda$ of $n$.
Denote by $[\lambda]$ the class  within $R_k(S_n)$ of the simple module
indexed by $\lambda$.  Recall that the
dimension of this simple module is the
number of {\it standard Young tableau $Q$ of shape $\lambda$}, which are labelings
of the cells of the boxes in the {\it Ferrers diagram} for $\lambda$ by the numbers $1,2,\ldots,|\lambda|=n$, increasing left-to-right in rows, and increasing top-to-bottom down columns.  For example, 
$$
Q=
\ytableausetup{smalltableaux}
\begin{ytableau} 1&2&4&7\\3&6\\5  
\end{ytableau} 
$$
is a standard Young tableau of {\it shape} $\lambda(Q):=(4,2,1)$.  One has a notion of
{\it descent set} for such a tableau:
$$
\Des(Q):=\{ i \in [n-1]:i+1 \text{ appears in a lower row than }i\text{ within }Q\}.
$$
For example, the tableau $Q$ shown above has $\Des(Q)=\{2,4\}$.

Stanley then proves the
following expression \cite[Thm. 4.3]{Stanley-some-aspects}
for the numerator on the far right side
of \eqref{equivariant-Hilb-expressions},
crediting it in different language to L. Solomon:
\begin{equation}
\label{Stanley-Solomon-type-A-formula}
\sum_{S \subset [n]}  [h^\kappa_S(\Delta P)] \cdot \boldt^S
=\sum_Q [\lambda(Q)] \cdot \boldt^{\Des(Q)}
\end{equation}
where here $Q$ runs over all {\it standard Young tableaux}
of size $n$.  This gives, via Proposition~\ref{Hilbert-series-calculations},
a very explicit expression for the $S_n$-equivariant Hilbert series of $\k[\Delta P]$:
\begin{equation}
\label{Stanley-Solomon-type-A-formula-hilb-consequence}
\Hilb_{\equivariant}(\k[\Delta P],\boldt)=
\frac{
\sum_Q [\lambda(Q)] \cdot \boldt^{\Des(Q)}
}
{\prod_{i=1}^n( 1-t_i )}
\end{equation}
\end{example}

\begin{example} \rm
\label{Stanley-example-type-B}
Stanley \cite[\S 6]{Stanley-some-aspects}
also proves a {\it type $B$} analogue
of the previous results.  He replaces the Boolean algebra with the poset of boundary faces of the $n$-dimensional {\it cross-polytope}, that is, the
convex hull of the vectors $\{ \pm e_1,\ldots,\pm e_n \}$
where $e_1,\ldots,e_n$ are standard basis vectors
in $\mathbb{R}^n$.  This face poset $P$ is isomorphic to a Cartesian
product $\{0,+1,-1\}^n$, with this
componentwise order:

\begin{center}
\begin{tikzpicture}[scale=0.7]
\pgfmathsetlengthmacro{\rad}{25pt};
\pgfmathsetlengthmacro{\vertsize}{\rad*0.11};

\pgfmathsetmacro{\triangleopacity}{0.4};
\definecolor{dimtwocolor}{RGB}{140,140,200}
    \begin{scope}[shift={(-4.5,0)}]
		\node (0) at (0,-1.5) {$0$};
		\node (1)  at (-1,0) {\small $+1$};
		\node (2)  at (1,0)  {\small $-1$};	
		
		\draw [very thick, draw=black] (1) -- (0) -- (2);
    \end{scope}
    
\end{tikzpicture}
\end{center}

\noindent
The isomorphism sends an element $P=(\epsilon_1,\ldots,\epsilon_n)$ in $\{0,+1,-1\}^n$
to the boundary face of the cross-polytope
which is the convex hull of 
the vectors $\{\epsilon_i \cdot e_i:  \epsilon_i \neq 0\}$.

It is again true that 
$\Delta P$ is Cohen-Macaulay over any field $\k$.  The
group of poset automorphisms of $P$ is the
{\it hyperoctahedral group} $B_n$ of all
$n \times n$ signed permutation matrices, that is,
matrices in $\{0,\pm 1\}^{n \times n}$ having one nonzero entry in each row and column. Hence $B_n$ is a subgroup of the group $\Aut_\kappa(\Delta P)$.

When $\k$ has characteristic zero, the simple $\k[B_n]$-modules are indexed by {\it double partitions} of $n$,
which are ordered pairs $(\lambda^{(1)},\lambda^{(2)})$ of partitions whose 
sum of entries $|\lambda^{(1)}|+ |\lambda^{(2)}|=n$.
Denote by $[(\lambda^{(1)},\lambda^{(2)})]$ the class of this simple module within $R_k(B_n)$.
The dimension of this simple
module is given by the
number of {\it double standard Young tableaux} $Q=(Q_1,Q_2)$ of  {\it shape} $(\lambda^{(1)},\lambda^{(2)})$, where
each $Q_i$ is a labeling
of the cells of $\lambda^{(i)}$ with
values in $[n]$ so that
each $i$ in $[n]$ appears
exactly once, either in $Q_1$ or $Q_2$.

Stanley defines a notion of {\it descent set} 
for a double standard Young tableau $Q=(Q_1,Q_2)$:
$$
\begin{aligned}
\Des(Q):= 
&\{ i \in [n-1]: i,i+1 \text{ both appear in the same }Q_j, 
\text{ and }
i + 1\text{ appears in a lower row than }i\}\\
&\cup
\{i\in[n] : i\text{ appears in }Q_1,\text{ and }
i + 1\text{ in }Q_2,
\text{ or }
i = n\text{ and }n
\text{ appears in }Q_1\}.
\end{aligned}
$$
Repeating one of his examples, this double standard Young tableau 
$$
Q=(Q_1,Q_2)=
\left(
\ytableausetup{smalltableaux}
\begin{ytableau} 1&4&5\\6&9 
\end{ytableau},
\begin{ytableau} 2&7\\3\\8 
\end{ytableau}
\right)
$$
has $\Des(Q)=\{1,2,5,6,7,9\}$
and $(\lambda^{(1)}(Q),\lambda^{(2)}(Q))=((3,2),(2,1,1))$.

He then states and proves the following result \cite[Thm. 6.4]{Stanley-some-aspects} analogous to \eqref{Stanley-Solomon-type-A-formula}:
\begin{equation}
\label{Stanley-type-B-formula}
\sum_{S \subset [n]}  [h^\kappa_S(\Delta P)] \cdot \boldt^S
=\sum_Q [\lambda^{(1)}(Q),\lambda^{(2)}(Q)] \cdot \boldt^{\Des(Q)}
\end{equation}
where $Q$ runs over all double standard Young tableaux with $n$ cells.
Then Proposition~\ref{Hilbert-series-calculations}
again gives a very explicit expression for the $S_n$-equivariant Hilbert series of $\k[\Delta P]$:
\begin{equation}
\label{Stanley-type-B-formula-hilb-consequence}
\Hilb_{\equivariant}(\k[\Delta P],\boldt)=
\frac{
\sum_Q [\lambda^{(1)}(Q),\lambda^{(2)}(Q)] \cdot \boldt^{\Des(Q)}
}
{\prod_{i=1}^n( 1-t_i )}.
\end{equation}
\end{example}

Our last family of Cohen-Macaulay order complexes $\Delta P$ were
studied by Athanasiadis \cite{Athanasiadis}, who decomposed
$[h^\kappa_S(\Delta P)]$ into irreducibles.  In fact, this
family provided the original motivation for our study.

\begin{example} \rm
\label{Athanasiadis-example}
The poset $P$ of {\it injective words on $n$ letters} has as  its underlying set all words in the alphabet $[n]$ using each letter at most once.
One has $u \leq v$ in $P$ if $u$ is a (not necessarily contiguous) {\it subword} of $v=(v_1,\ldots,v_m)$, meaning that $u=(v_{i_1},\ldots,v_{i_\ell})$ for
some indices $1 \leq i_1 < i_2 < \cdots < i_\ell \leq m$.  We depict $P$ here for $n=2,3$, abbrevating a word 
$v=(v_1,v_2,\ldots,v_m)$ as $v_1 v_2 \cdots v_m$:

\begin{center}
\begin{tikzpicture}[scale=0.7]
\pgfmathsetlengthmacro{\rad}{25pt};
\pgfmathsetlengthmacro{\vertsize}{\rad*0.11};

\pgfmathsetmacro{\triangleopacity}{0.4};
\definecolor{dimtwocolor}{RGB}{140,140,200}

    \begin{scope}[shift={(-4.5,0)}]
    
		\node (emptyset) at (0,-1.5) {$\varnothing$};
		
		\node (1)  at (-1,0) {\small $1$};
		\node (2)  at (1,0)  {\small $2$};	
		\node (12) at (-1,2) {\small $12$};		
		\node (21) at (1,2)  {\small $21$};		
		
		\draw [very thick, draw=black] (emptyset) -- (1) -- (12);
		\draw [very thick, draw=black] (1) -- (21);
		\draw [very thick, draw=black] (2) -- (12);
	    \draw [very thick, draw=black] (emptyset) -- (2) -- (21);
    \end{scope}

    \begin{scope}[shift={(4.5,0)}]
		\node (emptyset) at (0,-1.5) {$\varnothing$};
		
		\node (1)  at (-3,0) {\small $1$};
		\node (2)  at (0,0)  {\small $2$};
		\node (3)  at (3,0)  {\small $3$};	
		\node (12) at (-5,2) {\small $12$};		
		\node (21) at (-3,2)  {\small $21$};
		\node (13) at (-1,2) {\small $13$};		
		\node (31) at (1,2)  {\small $31$};
		\node (23) at (3,2) {\small $23$};		
		\node (32) at (5,2)  {\small $32$};
		\node (123) at (-5,5)  {\small $123$};
		\node (132) at (-3,5)  {\small $132$};
		\node (213) at (-1,5)  {\small $213$};
		\node (231) at (1,5)  {\small $231$};
		\node (312) at (3,5)  {\small $312$};
		\node (321) at (5,5)  {\small $321$};
		
		\draw [very thick, draw=black] (emptyset) -- (1) -- (12);
		\draw [very thick, draw=black] (1) -- (21);
		\draw [very thick, draw=black] (2) -- (12);
	    \draw [very thick, draw=black] (emptyset) -- (2) -- (21);
	    \draw [very thick, draw=black] (emptyset) -- (1) -- (13);
		\draw [very thick, draw=black] (1) -- (31);
		\draw [very thick, draw=black] (2) -- (23);
	    \draw [very thick, draw=black] (emptyset) -- (2) -- (32);
	    \draw [very thick, draw=black] (emptyset) -- (3) -- (23);
		\draw [very thick, draw=black] (3) -- (32);
		\draw [very thick, draw=black] (2) -- (12);
	    \draw [very thick, draw=black] (emptyset) -- (2) -- (21);
	    \draw [very thick, draw=black] (emptyset) -- (3) -- (31);
		\draw [very thick, draw=black] (3) -- (13);
		\draw [very thick, draw=black] (12) -- (123);
		\draw [very thick, draw=black] (12) -- (132);
		\draw [very thick, draw=black] (12) -- (312);
		\draw [very thick, draw=black] (13) -- (123);
		\draw [very thick, draw=black] (13) -- (132);
		\draw [very thick, draw=black] (13) -- (213);
		\draw [very thick, draw=black] (23) -- (123);
		\draw [very thick, draw=black] (23) -- (213);
		\draw [very thick, draw=black] (23) -- (231);
		\draw [very thick, draw=black] (21) -- (213);
		\draw [very thick, draw=black] (21) -- (231);
		\draw [very thick, draw=black] (21) -- (321);
		\draw [very thick, draw=black] (31) -- (321);
		\draw [very thick, draw=black] (31) -- (312);
		\draw [very thick, draw=black] (31) -- (231);
		\draw [very thick, draw=black] (32) -- (132);
		\draw [very thick, draw=black] (32) -- (312);
		\draw [very thick, draw=black] (32) -- (321);
    \end{scope}
    
\end{tikzpicture}
\end{center}

The symmetric group $S_n$ permutes the letters $[n]$, and thus permutes the 
injective words $u=(u_1,\ldots,u_\ell)$ via 
$w(u):=(w(u_1),\ldots,w(u_\ell))$.  Hence $S_n$
is a subgroup of poset automorphisms of $P$,
and of $\Aut_\kappa(\Delta P)$. 

It is known that $\Delta P$ is Cohen-Macaulay over any field $\k$.
We review here Athanasiadis's description of 
$[h^\kappa_S(\Delta P)]$;  see \cite{Athanasiadis} for more details.
When $\k$ has characteristic zero, we will use the same notation
$[\lambda]$ for the class of the irreducible $\k S_n$-module indexed by $\lambda$ as in Example~\ref{Stanley-example-type-A}.
For permutations
$w=(w_1,\ldots,w_n)$ in $S_n$
introduce their usual {\it descent set} 
$$
\Des(w):=\{i \in [n-1]: w_i > w_{i+1}\}.
$$
For each pair $(w,Q)$ of a permutation $w$ in $S_n$ and standard Young tableau $Q$ of size $n$, introduce a certain statistic $\tau(w,Q)$ taking
values in $\{0,1,2\ldots,n\}$, defined as follows.
If $\Des(w)=S=\{s_1 < s_2 < \cdots <s_k\}$, with
convention $s_0=0, s_{k+1}=n$, let
$w_S$ be the unique longest permutation in $S_n$ (the one with most
{\it inversions} $i<j$ with $w(i) > w(j)$) satisfying $\Des(w_S)=S=\Des(w)$.
Then define 
$\tau(w,Q)$ be the largest index $i$ in  
$\{0, 1, \ldots , k + 1\}$ for which
both $w(x) = w_S(x)$ for all $x > s_{k-i+1}$ and
 $\min \Des(Q) \geq n-s_{k-i+1}$.

Athanasiadis then proves 
this expression \cite[Thm. 1.2]{Athanasiadis} for the numerator on the far right 
of \eqref{equivariant-Hilb-expressions}:
\begin{equation}
\label{Athanasiadis-formula}
\sum_{S \subset [n]}  [h^\kappa_S(\Delta P)] \cdot \boldt^S
=\sum_{Q} [\lambda(Q)] 
\left(
\sum_{\substack{w\in S_n:\\\tau(w,Q)\text{ odd}}} \boldt^{\Des(w)}  +
 t_n \sum_{\substack{w\in S_n:\\\tau(w,Q)\text{ even}}} \boldt^{\Des(w)}
\right).
\end{equation}
Here $Q$ runs over all standard Young tableaux with $n$ cells.
Again Proposition~\ref{Hilbert-series-calculations} gives
an expression for the $S_n$-equivariant Hilbert series of $\k[\Delta P]$, with numerator \eqref{Athanasiadis-formula} and denominator
$\prod_{i=1}^n(1-t_i)$
\end{example}

\subsection{Equivariant resolutions and Tor}
\label{equivariant-resolution-section}

One way to compute the Hilbert series of a finitely generated graded module $M$ over a graded ring $A$ is by an $A$-free resolution of $M$.  This still holds in an equivariant setting where one has a finite group $G$ acting on $M$
in a grade-preserving fashion, but one must be slightly
more carefully about the statements.  We collect here
some of the facts that we will need in our setting.

We will work with $A=\k[z_1,\ldots,z_d]$ a polynomial ring, possibly multigraded, and $M$ a finitely generated multigraded $A$-module.  Assume one is given a finite
group $G$ that acts trivially on $A$, that is, fixing it
pointwise.  Also assume that $G$ acts on $M$ in a grade-preserving fashion
that commutes with the $A$-module structure, that is,
$g(am) = ag(m)$ for all $a$ in $A$ and $m$ in $M$.

\begin{prop}
\label{equivariant-resolution-prop}
In the above setting, there exists an equivariant finite free $A$-resolution $\mathcal{F}$ of $M$
\begin{equation}
    \label{generic-MFR}
\mathcal{F}: 0 \rightarrow F_d \rightarrow \cdots \rightarrow F_0 \rightarrow M \rightarrow 0.
\end{equation}
Here each $F_i$ is both a free $A$-module
of finite rank 
and a $\k G$-module, of the form
$A \otimes_\k U_i$ for some finite-dimensional graded $\k G$-module $U_i$, with all maps being $A$-module and $\k G$-module
morphisms.

This gives an expression for
the equviariant Hilbert series of $M$ as
\begin{equation}
\label{general-resolution-hilb-expression}
\begin{aligned}
\Hilb_\equivariant(M,\boldt)
&=\Hilb(A,\boldt) \sum_{i=0}^d (-1)^i \Hilb_\equivariant(U_i,\boldt) \\
&= \Hilb(A,\boldt) \sum_{i=0}^d (-1)^i \Hilb_\equivariant(\Tor^A_i(M,\k) ,\boldt).
\end{aligned}
\end{equation}
This resolution $\mathcal{F}$ is not necessarily minimal,
but when
$\k G$ is semisimple (so $\# G$ lies in $\k^\times$), then it may be chosen minimally.
In this case, one has $\k G$-module isomorphisms $\Tor_i^A(M,\k) \cong U_i$ for $i=0,1,2,\ldots,d$.
\end{prop}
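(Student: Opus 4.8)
The plan is to build the resolution $\mathcal{F}$ by the usual inductive ``kill the generators'' construction, carried out in the category of $\k G$-equivariant graded $A$-modules rather than just graded $A$-modules. First I would recall that $M$ is a finitely generated graded module over the Noetherian graded ring $A$, and that $G$ acts grade-preservingly and $A$-linearly; hence each graded piece $M_\boldb$ is a finite-dimensional $\k G$-module, and the minimal generators of $M$ can be organized degree by degree. Concretely, pick a finite-dimensional graded $\k G$-submodule $U_0 \subseteq M$ (for instance $U_0 = M/\mathfrak{m}M$ lifted, or just a $G$-stable graded vector-space complement to $\mathfrak{m}M$ when $\k G$ is semisimple, and otherwise simply a finite-dimensional graded $\k G$-subspace of $M$ surjecting onto $M/\mathfrak{m}M$) and set $F_0 := A \otimes_\k U_0$. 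The multiplication map $F_0 = A\otimes_\k U_0 \to M$ is then a surjection of graded $A$-modules which is also $\k G$-equivariant, since $G$ acts trivially on $A$ and we chose $U_0$ to be $G$-stable. Let $M_1 := \ker(F_0 \to M)$; it is again a finitely generated graded $A$-module (submodule of a Noetherian module) carrying a compatible $G$-action, so we may iterate. By the graded version of Hilbert's Syzygy Theorem over the polynomial ring $A = \k[z_1,\ldots,z_d]$, after at most $d$ steps the kernel becomes free, and the process terminates with a resolution of length $\le d$ of the stated form \eqref{generic-MFR}, with $F_i = A\otimes_\k U_i$ and all differentials simultaneously $A$-linear and $\k G$-linear.

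Next I would derive the Hilbert series identity \eqref{general-resolution-hilb-expression}. Taking the alternating sum of equivariant Hilbert series along the exact complex $\mathcal{F}$ (the equivariant Euler characteristic is additive on short exact sequences by the defining relations of $\Groth_\k(G)$), one gets $[M] = \sum_i (-1)^i [F_i]$ in the completed Grothendieck ring $\Groth_\k(G)[[\boldt]]$. Since $F_i = A\otimes_\k U_i$ with $G$ acting trivially on $A$, we have $\Hilb_\equivariant(F_i,\boldt) = \Hilb(A,\boldt)\cdot \Hilb_\equivariant(U_i,\boldt)$ — here the scalar $\Hilb(A,\boldt)=\prod_j(1-t_j)^{-1}$ is central — which yields the first line of \eqref{general-resolution-hilb-expression}. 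The second line follows by identifying $U_i$ up to the relations in $\Groth_\k(G)$ with $\Tor_i^A(M,\k)$: applying $-\otimes_A \k$ to $\mathcal{F}$ gives the complex $U_0 \leftarrow U_1 \leftarrow \cdots$ with zero differentials if $\mathcal{F}$ is minimal, and in general the alternating sum of the $[U_i]$ equals the alternating sum of the homology classes $[\Tor_i^A(M,\k)]$, which is all that is needed for the Hilbert-series formula.

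Finally, for the minimality and the isomorphism $\Tor_i^A(M,\k)\cong U_i$ when $\#G \in \k^\times$: here $\k G$ is semisimple by Maschke's theorem, so at each stage of the construction the surjection $F_i \twoheadrightarrow M_i$ onto the syzygy module can be chosen with $U_i$ of minimal dimension, namely a $\k G$-module lift of $M_i/\mathfrak{m}M_i$ — such a lift exists because the surjection of $\k G$-modules $(M_i)_\boldb \to (M_i/\mathfrak{m}M_i)_\boldb$ splits $G$-equivariantly in each degree $\boldb$ by semisimplicity. With this choice every differential in $\mathcal{F}$ has entries in $\mathfrak{m}$, so $\mathcal{F}\otimes_A\k$ has zero differentials and $\Tor_i^A(M,\k)=H_i(\mathcal{F}\otimes_A\k)=U_i$ as $\k G$-modules. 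The main obstacle — and the one point that genuinely uses the semisimplicity hypothesis — is exactly this equivariant lifting of a minimal generating space: without $\#G\in\k^\times$ one can still run the non-minimal construction (choosing any finite-dimensional $G$-stable graded subspace surjecting onto $M_i/\mathfrak{m}M_i$, e.g. the span of a $G$-orbit of an ordinary minimal generating set), which suffices for \eqref{general-resolution-hilb-expression} but not for the module-level identification of $\Tor$ with $U_i$.
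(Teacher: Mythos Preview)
Your proposal is correct and is essentially the standard argument one would give for this result. The paper itself does not supply a proof at all: it simply cites \cite[Prop.~2.1(i)--(iv)]{BRSW} and notes that the present setting (polynomial ring $A$ with trivial $G$-action) is a special case treated there. So there is nothing to compare at the level of argument; your write-up is in effect a reconstruction of what that cited proof contains, namely the equivariant ``kill the generators'' construction, termination via Hilbert's syzygy theorem, the Euler-characteristic computation in $\Groth_\k(G)[[\boldt]]$, and the use of Maschke's theorem to obtain an equivariant splitting of $M_i \twoheadrightarrow M_i/\mathfrak{m}M_i$ in the semisimple case.

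One small point worth tightening: in the non-semisimple case you want a finite-dimensional $G$-stable graded subspace $U_0\subseteq M$ surjecting onto $M/\mathfrak{m}M$. The cleanest choice is $U_0=\bigoplus_{\boldb\in S} M_\boldb$, where $S$ is the finite set of multidegrees in which $M/\mathfrak{m}M$ is nonzero; this is automatically $G$-stable since $G$ preserves the grading, and it maps onto $M/\mathfrak{m}M$ by construction. Your alternative suggestion of taking the span of a $G$-orbit of a minimal generating set also works. Either way the rest of your argument goes through unchanged.
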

\begin{proof}
This is
\cite[Prop. 2.1(i)-(iv)]{BRSW} for polynomial rings with trivial $G$-action, and modules over them.
\end{proof}

\section{A colorful Hochster formula}
\label{colorful-Hochster-section}

Having fixed a $d$-coloring $\kappa$ of $\Delta$, 
the denominator $\prod_{j=1}^d (1-t_j)$ on the rightmost side of \eqref{equivariant-Hilb-expressions} suggests regarding
$\k[\Delta]$ as a module over an auxiliary
polynomial ring $A:=\k[z_1,\ldots,z_d]$, with an $\N^d$-multigrading
in which $\deg(z_j)=\epsilon_j$.  One can naturally endow $\k[\Delta]$ with such 
an $A$-module structure if one lets $z_j$ act on $\k[\Delta]$ as multiplication by
the following element $\gamma_j$ in $\k[\Delta]$, the sum of all vertices of color $j$:
$$
\gamma_j:=\sum_{\substack{i \in [n]:\\ \kappa(i)=j}} x_j.
$$

The following proposition shows how
 $z_j$ acts on the monomial $\k$-basis 
for $\k[\Delta]$.  We omit the proof, which is
straightforward, using the properness of the $d$-coloring $\kappa$.

\begin{prop}
\label{monomial-calculation-prop}
Given $\bolda \in \N^n$ with $\supp(\bolda) \in \Delta$,
then $z_j$ acts on $\boldx^\bolda=x_1^{a_1}\cdots x_n^{a_n}$
 in $\k[\Delta]$ as follows:
\begin{itemize}
    \item[(i)] If $j$ appears in $\kappa(\supp(\bolda))$, say
    $a_i >0$ and $\kappa(i)=j$, then
$$
z_j \left( \boldx^{\bolda} \right)
:=\gamma_j \cdot \boldx^{\bolda}
=\boldx^{\bolda+e_i}.
$$
\item[(ii)] If $j$ does not appear in $\kappa(\supp(\bolda))$,
then 
$$
z_j \left( \boldx^{\bolda} \right)
:=\gamma_j \cdot \boldx^{\bolda}
=
   \displaystyle\sum_{i} \boldx^{\bolda + e_i} 
$$
where the sum is over vertices $i$ in $V=[n]$ for which $\kappa(i)=j$ and
$\supp(\bolda) \cup \{i\}$ is a face in $\Delta$. 
\end{itemize}
\end{prop}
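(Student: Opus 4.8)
The plan is to work directly with the monomial description of $\k[\Delta]$. First I would record the basic vanishing criterion: since $I_\Delta$ is a monomial ideal generated by the square-free monomials $\boldx^S$ with $S \notin \Delta$, and $\Delta$ is closed under passing to subsets, a monomial $\boldx^\boldb$ lies in $I_\Delta$ if and only if $\supp(\boldb) \notin \Delta$. Consequently, inside $\k[\Delta]$ each monomial $\boldx^\boldb$ is either equal to the corresponding basis element (when $\supp(\boldb) \in \Delta$) or equal to $0$ (when $\supp(\boldb) \notin \Delta$). With this in hand, the action of $z_j$ on $\boldx^\bolda$ is computed by expanding
\[
z_j\left(\boldx^\bolda\right) = \gamma_j \cdot \boldx^\bolda = \sum_{i' : \kappa(i') = j} \boldx^{\bolda + e_{i'}},
\]
and then discarding those summands whose support fails to be a face of $\Delta$.

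For case (i), I would invoke properness of $\kappa$ twice. Since $\supp(\bolda) \in \Delta$ and any face has vertices of pairwise distinct colors, there is a \emph{unique} vertex $i \in \supp(\bolda)$ with $\kappa(i) = j$; this is what licenses the phrasing in the statement. Now examine the summand indexed by a vertex $i'$ with $\kappa(i') = j$. If $i' = i$, then $\supp(\bolda + e_i) = \supp(\bolda) \in \Delta$, so this summand survives and contributes $\boldx^{\bolda + e_i}$. If $i' \neq i$, then $\supp(\bolda + e_{i'}) = \supp(\bolda) \cup \{i'\}$ contains the two distinct vertices $i$ and $i'$, both of color $j$; by properness such a set is not a face, so by the vanishing criterion this summand is $0$ in $\k[\Delta]$. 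Hence $z_j(\boldx^\bolda) = \boldx^{\bolda + e_i}$.

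For case (ii), $j$ does not appear among the colors of $\supp(\bolda)$, so every vertex $i'$ with $\kappa(i') = j$ satisfies $i' \notin \supp(\bolda)$; thus $a_{i'} = 0$ and $\supp(\bolda + e_{i'}) = \supp(\bolda) \cup \{i'\}$. Each such summand therefore equals the basis element $\boldx^{\bolda + e_{i'}}$ precisely when $\supp(\bolda) \cup \{i'\} \in \Delta$ and vanishes otherwise, which is the asserted formula. I do not expect any genuine obstacle: the only points requiring care are the vanishing criterion for monomials in $\k[\Delta]$ and the bookkeeping of supports, and once properness has been used to force the uniqueness of $i$ in case (i), the argument is a term-by-term inspection.
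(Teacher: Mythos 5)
Your proof is correct, and it supplies exactly the argument the paper declines to spell out: the authors explicitly omit the proof of Proposition~\ref{monomial-calculation-prop}, remarking only that it is ``straightforward, using the properness of the $d$-coloring $\kappa$.'' Your reduction to the vanishing criterion for monomials in $\k[\Delta]$, followed by a term-by-term inspection of $\supp(\bolda + e_{i'})$ (with properness used to kill the cross-terms $i'\neq i$ in case (i) and to guarantee $i'\notin\supp(\bolda)$ in case (ii)), is the intended straightforward verification.
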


\begin{cor}
In the above setting, 
$\k[\Delta]$ is finitely generated over
$A=\k[z_1,\ldots,z_d]$, by $\{ \boldx^F: F \in \Delta\}$. 
\end{cor}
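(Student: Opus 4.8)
The plan is to use Proposition~\ref{monomial-calculation-prop}(i) to write each monomial basis element of $\k[\Delta]$ explicitly as an $A$-multiple of some $\boldx^F$. Recall that $\k[\Delta]$ has $\k$-basis $\{\boldx^\bolda : \supp(\bolda) \in \Delta\}$, so it suffices to show that every such $\boldx^\bolda$ lies in the $A$-submodule generated by $\{\boldx^F : F \in \Delta\}$.

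Fix $\bolda \in \N^n$ with $F := \supp(\bolda) \in \Delta$. Since $\kappa$ is a proper coloring and $F$ is a face, the restriction $\kappa|_F$ is injective, so the colors $\{\kappa(i) : i \in F\}$ are pairwise distinct. I claim
$$
\boldx^\bolda = \left( \prod_{i \in F} z_{\kappa(i)}^{\,a_i - 1} \right) \cdot \boldx^F
$$
in $\k[\Delta]$. To see this, order the factors on the right arbitrarily and apply them to $\boldx^F$ one at a time. At each stage the monomial reached has support exactly $F$: applying $z_{\kappa(i)}$ with $i \in F$ via case (i) of Proposition~\ref{monomial-calculation-prop} only increments the exponent of a variable already present, and since $\kappa|_F$ is injective, $i$ is the unique vertex of $F$ with color $\kappa(i)$, so Proposition~\ref{monomial-calculation-prop}(i) sends the current monomial $\boldx^\boldb$ to $\boldx^{\boldb + e_i}$. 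Applying $z_{\kappa(i)}$ a total of $a_i - 1$ times for each $i \in F$ therefore produces $\boldx^\bolda$. (Equivalently, one can induct on $|\bolda| = \sum_i a_i$: if $|\bolda| = \#F$ then $\bolda$ is the indicator vector of $F$ and $\boldx^\bolda = \boldx^F$; otherwise some $a_i \geq 2$, the vector $\bolda - e_i$ still has support $F \in \Delta$ with smaller total degree, and $z_{\kappa(i)} \cdot \boldx^{\bolda - e_i} = \boldx^\bolda$ by case (i), so induction finishes.)

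Finally, $\Delta$ is a finite simplicial complex, so $\{\boldx^F : F \in \Delta\}$ is a finite set, and $\k[\Delta]$ is finitely generated over $A = \k[z_1,\ldots,z_d]$. The only point demanding any care is checking that each step above is governed by case (i) and not case (ii) of Proposition~\ref{monomial-calculation-prop}, i.e.\ that the support never grows; this is exactly where properness of $\kappa$ (injectivity of $\kappa|_F$) enters, and there is no genuine obstacle beyond this bookkeeping.
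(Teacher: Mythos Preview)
Your proof is correct and follows essentially the same approach as the paper's: both use Proposition~\ref{monomial-calculation-prop}(i) to rewrite a monomial $\boldx^\bolda$ with support $F$ as $\left(\prod_{i \in F} z_{\kappa(i)}^{a_i-1}\right)\cdot \boldx^F$. Your version is more expansive in justifying why case~(i) applies at each step (via injectivity of $\kappa|_F$), but the argument is the same.
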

\begin{proof}
Proposition~\ref{monomial-calculation-prop} case (i)
shows that if a face $F=\{i_1,\ldots,i_r\}$ of $\Delta$
has vertices colored $\kappa(i_\ell)=j_\ell$ for $\ell=1,2,\ldots,r$, then
the $\k$-basis element
$
\prod_{i \in F} x_i^{a_i}
$
with $a_i \geq 1$ can be re-written as
$
z_{j_1}^{a_1-1}\cdots z_{j_r}^{a_r-1} \cdot  \boldx^F.
$
\end{proof}

Note that when $\Aut_\kappa(\Delta)$ acts on
$\k[\Delta]$, it fixes each of $\gamma_1,\ldots,\gamma_d$.
Therefore, regarding $A$ as having trivial $\Aut_\kappa(\Delta)$-action, Proposition~\ref{equivariant-resolution-prop} 
applies to the $\N^d$-graded polynomial ring $A$
and $\N^d$-graded $A$-module $\k[\Delta]$.
One can therefore consider $\Tor_m^A(\k[\Delta],\k)$
as an $\N^d$-graded $k$-vector space,
whose $\boldb$-homogeneous component
will be denoted $\Tor_m^A(\k[\Delta],\k)_\boldb$.
Our colorful version of Hochster's Formula \cite{Hochster}
expresses  $\Tor_m^A(\k[\Delta],\k)_\boldb$ in terms of the (reduced) cohomologies $\tilde{H}^*(\Delta|_S,\k)$,
where for $S \subseteq [d]$, 
the {\it color-selected subcomplex} $\Delta|_S$
is defined in \eqref{color-selected-subcomplex-defn}.
Note that $\Aut_\kappa(\Delta)$ acts as automorphisms on
each $\Delta|_S$, and on $\tilde{H}^*(\Delta|_S)$.

\begin{thm}
\label{colorful-Hochster-formula}
{\bf (Colorful Hochster formula)}
Fix any proper $d$-coloring $\kappa$ of a simplicial complex $\Delta$.  

Then in the above notations, for any $\boldb$
in $\N^d$, one has
$$
\Tor^A_m(\k[\Delta],\k)_\boldb
\cong
\begin{cases}
0 & \text{ if }
\boldb \not\in \{0,1\}^d,\\
\tilde{H}^{\#S-m-1}(\Delta|_S,\k) 
& \text{ if }
\boldb =\sum_{j \in S} \epsilon_j \in \{0,1\}^d.
\end{cases}
$$
Furthermore, these $\k$-vector space isomorphisms are equivariant with respect to
the group $\Aut_\kappa(\Delta)$.
\end{thm}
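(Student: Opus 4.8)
The plan is to compute $\Tor^A_*(\k[\Delta],\k)$ by resolving the residue field $\k = A/(z_1,\dots,z_d)$ via the Koszul complex $K_\bullet$ on $z_1,\dots,z_d$, which is both $\N^d$-graded and carries the trivial $\Aut_\kappa(\Delta)$-action, and then tensoring with $\k[\Delta]$. Concretely, $\Tor^A_m(\k[\Delta],\k)$ is the $m$-th homology of the complex $\k[\Delta]\otimes_A K_\bullet$, whose term in homological degree $m$ is $\bigoplus_{|S|=m} \k[\Delta]\cdot e_S$ with $e_S = e_{j_1}\wedge\cdots\wedge e_{j_m}$ for $S=\{j_1<\cdots<j_m\}$, differential $\partial(e_S) = \sum_{j\in S}\pm\, \gamma_j\, e_{S\setminus j}$, and the $z_j$ acting as multiplication by $\gamma_j$ as described in Proposition~\ref{monomial-calculation-prop}. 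This complex inherits the $\N^d$-multigrading and the $\Aut_\kappa(\Delta)$-action, so it suffices to analyze it one multidegree $\boldb\in\N^d$ at a time, equivariantly.

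Next I would fix $\boldb = (b_1,\dots,b_d)\in\N^d$ and identify the $\boldb$-component of $\k[\Delta]\otimes_A K_\bullet$ combinatorially. Using the monomial $\k$-basis $\{\boldx^\bolda : \supp(\bolda)\in\Delta\}$ of $\k[\Delta]$, the degree-$\boldb$ piece of $\k[\Delta]\cdot e_S$ has basis the pairs $(\bolda, S)$ with $\supp(\bolda)\in\Delta$, $S\subseteq[d]$, $|S|=m$, and $\deg_{\N^d}(\boldx^\bolda) + \sum_{j\in S}\epsilon_j = \boldb$; the latter forces $\kappa(\supp(\bolda))\subseteq \supp(\boldb)$ and pins down the contribution of each color. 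The key case distinction, exactly as in the two cases of Proposition~\ref{monomial-calculation-prop}, is whether a given color $j$ already appears in $\kappa(\supp(\bolda))$ (in which case $\gamma_j$ acts invertibly-up-to-shift, i.e.\ bijectively on basis monomials) or not (in which case $\gamma_j$ acts as a "cone/coboundary"-type sum over vertices of color $j$ extending the support). The colors $j$ where $\gamma_j$ acts bijectively contribute an acyclic tensor factor — this is where the vanishing for $\boldb\notin\{0,1\}^d$ comes from, since if $b_j\ge 2$ for some $j$ then every monomial in degree $\boldb$ uses color $j$ in its support, so the corresponding Koszul differential direction is a contractible two-term complex and kills all homology. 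When $\boldb\in\{0,1\}^d$, say $\boldb=\sum_{j\in S_0}\epsilon_j$, the surviving monomials are exactly the squarefree ones $\boldx^F$ with $F\in\Delta$ and $\kappa(F)\subseteq S_0$, i.e.\ $F\in\Delta|_{S_0}$, and the complex one is left with should be identified — after a degree shift $m \mapsto \#S_0 - m - 1$ — with the (augmented/reduced) cochain complex of $\Delta|_{S_0}$; here the "wrong" colors $j\in S_0\setminus\kappa(F)$ in the wedge $e_S$ record which extra color-classes are being "coned off", and the Koszul sign conventions match the simplicial coboundary signs.

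I expect the main obstacle to be exactly this last identification: writing down a clean, natural (and $\Aut_\kappa(\Delta)$-equivariant) chain isomorphism between the subquotient of $\k[\Delta]\otimes_A K_\bullet$ in multidegree $\boldb\in\{0,1\}^d$ and the shifted reduced cochain complex $\tilde C^{\#S_0-\bullet-1}(\Delta|_{S_0},\k)$, getting the signs and the indexing of the wedge generators to line up. The standard non-colorful Hochster proof via the Koszul/Taylor complex is a model, but one must check that the color-selection is compatible with the bijective-color-factors splitting; I would either argue by an explicit basis bijection $(\bolda,S)\leftrightarrow$ (a face of $\Delta|_{S_0}$ together with an oriented simplex recording $S\cap\kappa(F)^c$), or more slickly by a Künneth/tensor-decomposition argument writing the degree-$\boldb$ complex as a tensor product over $j\in[d]$ of two-term complexes, recognizing the "new color" factors as contributing the reduced simplicial cochain complex of $\Delta|_{S_0}$ and the others as acyclic. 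Since every map in sight is manifestly $\Aut_\kappa(\Delta)$-equivariant (the group permutes the monomial basis, fixes each $\gamma_j$, and acts on $\Delta|_{S_0}$ simplicially), the equivariance of the resulting cohomology isomorphism is automatic once the chain-level isomorphism is established. Finally, taking homology in each multidegree and reassembling over all $\boldb$ yields the stated formula.
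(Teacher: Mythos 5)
Your proposal follows essentially the same route as the paper's proof: restrict the Koszul complex $\k[\Delta]\otimes_A K_\bullet$ to each multidegree $\boldb$, use properness of $\kappa$ to produce an explicit chain contraction when some $b_j\geq 2$ (via the unique vertex $i_0$ of color $j$ in the support, with $D(\boldx^{\bolda}\otimes\omega)=\boldx^{\bolda-e_{i_0}}\otimes\epsilon_j\wedge\omega$), and identify the $\boldb$-strand with the shifted reduced cochain complex of $\Delta|_S$ when $\boldb\in\{0,1\}^d$, with equivariance coming for free from naturality. One small caution: the ``K\"unneth / tensor decomposition over $j\in[d]$'' alternative you float does not literally hold, since $\k[\Delta]$ does not factor over colors and the distinguished vertex $i_0$ depends on the monomial $\boldx^\bolda$; your primary route via an explicit basis-level chain contraction and cochain isomorphism (with the sign bookkeeping you flag) is the one that works, and is exactly what the paper carries out.
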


\begin{remark} \rm
If $\kappa$ is the trivial $n$-coloring of $V=[n]$,
Theorem~\ref{colorful-Hochster-formula} is Hochster's Formula 
\cite[Thm. 5.1]{Hochster}. 
For an interesting generalization of Hochster's formula in a different direction, see Bruns, Koch and R\"omer \cite[\S4]{BrunsKochRomer}. If
$\kappa$ is a balanced $d$-coloring for $\Delta$, Theorem~\ref{colorful-Hochster-formula} is closely related
to Conjecture~\ref{canonical-resolution} below.
\end{remark}

\begin{remark} \rm
We note that Theorem~\ref{colorful-Hochster-formula}
gives a second proof of the rightmost expression in \eqref{equivariant-Hilb-expressions}
for the equivariant Hilbert series of 
$\k[\Delta]$:
Applying Proposition~\ref{equivariant-resolution-prop},
one has
$$
\begin{aligned}
\Hilb_{\equivariant}(\k[\Delta],\boldt)
&=\Hilb(A,\boldt) \sum_{m=0}^d (-1)^m \,\,
 \Hilb_\equivariant \left(\Tor^A_m(\k[\Delta],\k),\boldt\right)\\
&=\frac{1}{\prod_{j=1}^d (1-t_j)}
     \sum_{m=0}^d (-1)^m \,\, 
    \sum_{S \subseteq [d]} [\tilde{H}^{\#S-m-1}(\Delta|_S,\k)] \cdot \boldt^S\\
&=\frac{1}{\prod_{j=1}^d (1-t_j)}
   \sum_{S \subseteq [d]} [h^\kappa_S(\Delta)] \cdot \boldt^S
\end{aligned}
$$
where the first equality used 
Proposition~\ref{equivariant-resolution-prop},
the second used Theorem~\ref{colorful-Hochster-formula}
and the third applied the definitions
\eqref{equivariant-h-definition}, \eqref{Euler-Poincare}
of $[h^\kappa_S(\Delta)]$.
\end{remark}

\begin{example} 
\label{example-revisited}
\rm
Continuing with the simplicial complex $\Delta$ from Example~\ref{running-example-one},
using the trivial $8$-coloring $\kappa$, one obtains the resolution whose shape is predicted by Hochster's original formula.  It has homological dimension $6=8-2$, as predicted by the Auslander-Buchsbaum Theorem \cite[Thm. 19.9]{Eisenbud}, since the depth of $\k[\Delta]$ is $2$.  Here is some (singly-graded) {\tt Macaulay2} output:

\begin{verbatim}
i1 : S = QQ[x_1..x_8];

i2 : IDelta = ideal(x_1*x_2, x_1*x_3, x_1*x_4, x_1*x_7, x_2*x_3, x_2*x_5, x_3*x_6,
                    x_4*x_5, x_4*x_6, x_4*x_7, x_5*x_6, x_5*x_7, x_6*x_7, x_7*x_8);

i3 : betti res IDelta;

            0  1  2  3  4 5 6
o3 = total: 1 14 36 39 22 7 1
         0: 1  .  .  .  . . .
         1: . 14 34 32 11 1 .
         2: .  .  2  7 11 6 1
\end{verbatim}

\noindent
For example, here the southeasternmost $1$ entry in the Betti table comes from the fact that $\tilde{H}^1(\Delta)=\k^1$, while the entry of $6=1+1+1+1+2$
directly to its left comes from 
$$
\tilde{H}^1(
\Delta|_{\{1,2,3,4,5,6,7,8\}\setminus\{i\}}
)=
\begin{cases}
0 & \text{ if } i=2,3,7,\\
\k^1 & \text{ if }i=1,4,5,6,\\
\k^2 & \text{ if }i=8.
\end{cases}
$$
On the other hand, using the proper $3$-coloring $\kappa$ of $\Delta$ discussed in the same example, one obtains a much shorter resolution of $\k[\Delta]$ over $A=\k[z_1,z_2,z_3]$, having homological dimension $1=3-2$, as shown here:
\newpage
\begin{verbatim}
i4 : phi = map(S, QQ[z_1..z_3], matrix {{x_1+x_2+x_3, x_4+x_5+x_6+x_7, x_8}});

i5 : betti res pushForward(phi, S^1/IDelta);

            0 1
o5 = total: 8 2
         0: 1 .
         1: 5 1
         2: 2 1
\end{verbatim}
The equivariant and $\N^3$-multigraded refinement 
of the above $\N$-graded nonequvariant Betti table is this:
\begin{equation}
\label{equivariant-multigraded-Betti-table-example}
\begin{tabular}{|c||c|}\hline
$
\Hilb_{\equivariant}( \Tor_0^A(\k[\Delta],\k), \boldt)
$
& 
$
\Hilb_{\equivariant}( \Tor_1^A(\k[\Delta],\k), \boldt)
$\\\hline\hline
$1$&  \\\hline
$+(1+\epsilon)t_1+(2+\epsilon)t_2$ & 
$1 \cdot t_2 t_3$ \\\hline
$+2\epsilon \cdot t_1 t_2$ & 
$+\epsilon \cdot t_1 t_2 t_3$ \\\hline
\end{tabular}
\end{equation}
which one can check is consistent with the equivariant $\N^3$-graded
Hilbert series shown in \eqref{equivariant-multigraded-hilb-example}.

\end{example}

Our proof of Theorem~\ref{colorful-Hochster-formula}
simply generalizes Hochster's proof of
his original formula \cite[Thm. 5.1]{Hochster}.

\begin{proof}[Proof of Theorem~\ref{colorful-Hochster-formula}.]
We compute $\Tor^A(\k[\Delta],\k)$ via a Koszul resolution $\K$ of $\k$.  Here $\k$ is the trivial $A$-module $\k=A/(z_1,\ldots,z_m)$, carrying trivial action of $\Aut_\kappa(\Delta)$.  This Koszul resolution $\K$ has
$m^{th}$ term 
$$
K_m=A \otimes_\k \wedge^m \k^d
$$
where $\k^d$ has standard basis elements $\epsilon_1,\ldots,\epsilon_d$.
Applying $\k[\Delta] \otimes_A(-)$ gives a complex $\k[\Delta] \otimes_A \K$ of $A$-modules,
whose homology computes $\Tor^A(\k[\Delta],\k)$.
The $m^{th}$ term of $\k[\Delta] \otimes_A \K$ is the $A$-module
$$
\k[\Delta] \otimes_A A \otimes_\k \wedge^m \k^d
\quad \cong \quad 
\k[\Delta]\otimes_\k \wedge^m \k^d,
$$
where $A=\k[z_1,\ldots,z_d]$ acts with
$z_j$ multiplying by $\gamma_j$ in the left tensor factor of 
$\k[\Delta]\otimes_\k \wedge^m \k^d$. The group $\Aut_\kappa(\Delta)$ also acts trivially on the right tensor factor $\wedge^m \k^d$, but nontrivially on the left factor $\k[\Delta]$.  The differential $\partial$ acts on a $\k$-basis element 
$
\boldx^\bolda \otimes \epsilon_{j_1} \wedge \cdots \wedge \epsilon_{j_m}
$
with $1 \leq j_1 < \cdots < j_m \leq d$ as follows:
\begin{equation}
\label{Koszul-differential}
\partial\left( 
\boldx^\bolda \otimes \epsilon_{j_1} \wedge \cdots \wedge \epsilon_{j_m}
\right)
=\sum_{\ell=1}^m (-1)^{\ell-1}
  \gamma_{j_\ell} \cdot \boldx^\bolda \otimes 
  \epsilon_{j_1} \wedge \cdots \wedge \widehat{\epsilon_{j_\ell}} \wedge \cdots \wedge \epsilon_{j_m}
\end{equation}
Given $\boldb$ in $\N^d$, we consider the $\boldb$-multigraded strand $\left(\k[\Delta] \otimes_A \K\right)_\boldb$, whose $\k$-basis
are the elements 
\begin{equation}
    \label{strand-basis-elements}
\{
\boldx^\bolda \otimes \epsilon_{j_1} \wedge \cdots \wedge \epsilon_{j_m}:
\supp(\bolda) \in \Delta \text{ and }
1 \leq j_1 < \cdots j_m \leq d \text{ and }
\deg_{\N^d}(\boldx^\bolda)+\epsilon_{j_1}+ \cdots+\epsilon_{j_m}=\boldb 
\}.
\end{equation}
We show $\left(\k[\Delta] \otimes_A \K\right)_\boldb$ is acyclic if
$\boldb \not\in \{0,1\}^d$, and otherwise identify it
with $\tilde{C}(\Delta|_S,\k)$ if $\boldb =\sum_{j \in S} \epsilon_j$.

\vskip.1in
\noindent
{\sf Case 1.} The multidegree $\boldb$ does not lie in $\{0,1\}^d$.

Here we wish to show $\left(\k[\Delta] \otimes_A \K\right)_\boldb$
is acyclic.
Since $\boldb$ lies in $\N^d$ but not in $\{0,1\}^d$,
we may assume without loss of generality, by reindexing the coordinates,
that it has first coordinate $b_1 \geq 2$.  
We will use this to define a $\k$-linear {\it chain contraction}  
$$
\left(\k[\Delta] \otimes_A \K_m \right)_\boldb
\overset{D}{\longrightarrow}
\left(\k[\Delta] \otimes_A \K_{m+1}\right)_\boldb
$$
satisfying $\partial D+D \partial=1$, which then implies acyclicity.

To define $D$, note that the inequality $b_1 \geq 2$ together with the conditions in \eqref{strand-basis-elements} imply that $\boldx^\bolda$ is divisible by at
least one variable $x_{i_0}$ with color $\kappa(i_0)=1$.
But then the fact that $\kappa$ is a proper vertex-coloring, 
along with the condition in \eqref{strand-basis-elements} 
that $\supp(\bolda)$ lies in $\Delta$, forces this variable
$x_{i_0}$ to be unique.  Thus we can define
$$
D\left( 
\boldx^\bolda \otimes \epsilon_{j_1} \wedge \cdots \wedge \epsilon_{j_m} \right)
:= \boldx^{\bolda-e_{i_0}} \otimes \epsilon_1 \wedge \epsilon_{j_1} \wedge \cdots \wedge \epsilon_{j_m}
$$
and extend this map $\k$-linearly to all of 
$\left(\k[\Delta] \otimes_A \K_m \right)_\boldb$.
It remains to check that $\partial D+D \partial$ acts as the identity 
on each $\k$-basis element from \eqref{strand-basis-elements}.  There are two cases to consider, namely  $j_1 \geq 2$ or $j_1=1$.

If $j_1 \geq 2$, then we calculate
$$
\begin{aligned}
\partial D
\left( \boldx^\bolda \otimes \epsilon_{j_1} \wedge \cdots \wedge \epsilon_{j_m} \right)
&=\partial \left(
  \boldx^{\bolda-e_{i_0}} \otimes \epsilon_1 \wedge \epsilon_{j_1} \wedge \cdots \wedge \epsilon_{j_m}
\right)\\
&=\gamma_1 \boldx^{\bolda-e_{i_0}} \otimes \epsilon_{j_1} \wedge \cdots \wedge \epsilon_{j_m}\\
&\qquad -\sum_{\ell=1}^m (-1)^{\ell-1} \gamma_{j_\ell}  \boldx^{\bolda-e_{i_0}} \otimes \epsilon_1 \wedge \epsilon_{j_1} \wedge \cdots  \wedge \widehat{\epsilon_{j_\ell}} \wedge \cdots \wedge \epsilon_{j_m} \\
\end{aligned}
$$
and also calculate
$$
\begin{aligned}
D \partial 
\left( \boldx^\bolda \otimes \epsilon_{j_1} \wedge \cdots \wedge \epsilon_{j_m} \right)
&= \sum_{\ell=1}^m (-1)^{\ell-1}  D\left( \gamma_{j_\ell} \boldx^{\bolda} \otimes \epsilon_{j_1} \wedge \cdots  \wedge \widehat{\epsilon_{j_\ell}} \wedge \cdots \wedge \epsilon_{j_m} \right) \\
&= \sum_{\ell=1}^m (-1)^{\ell-1}  \gamma_{j_\ell} \boldx^{\bolda-\epsilon_{i_0}} \otimes \epsilon_1 \wedge \epsilon_{j_1} \wedge \cdots  \wedge \widehat{\epsilon_{j_\ell}} \wedge \cdots \wedge \epsilon_{j_m} .
\end{aligned}
$$
Adding the previous two calculations shows that
$$
\left(\partial D+D\partial \right)
\left( \boldx^\bolda \otimes \epsilon_{j_1} \wedge \cdots \wedge \epsilon_{j_m} \right)
= \gamma_1 \boldx^{\bolda-e_{i_0}} \otimes \epsilon_{j_1} \wedge \cdots \wedge \epsilon_{j_m}
= \boldx^{\bolda} \otimes \epsilon_{j_1} \wedge \cdots \wedge \epsilon_{j_m}
$$
where the last equality used the fact that $i_0$ is the unique vertex with $\kappa(i_0)=1$ in $\supp(\bolda)$, and $a_1 \geq 2$, allowing us to employ case (i) from
Proposition~\ref{monomial-calculation-prop}.

If $j_1=1$, then the fact that $\epsilon_1 \wedge \epsilon_1=0$ implies that $D$ annihilates 
$\boldx^\bolda \otimes \epsilon_1 \wedge \epsilon_{j_2} \wedge \cdots \wedge \epsilon_{j_m}$,
and hence so does $\partial D$.  
On the other hand,
$$
\begin{aligned}
&D \partial 
\left( \boldx^\bolda \otimes \epsilon_1 \wedge \epsilon_{j_2} \wedge \cdots \wedge \epsilon_{j_m} \right)\\
&= D\left(
\gamma_1 \boldx^{\bolda} \otimes \epsilon_{j_2} \wedge \cdots \wedge \epsilon_{j_m}
\right)
+
 \sum_{\ell=2}^m (-1)^{\ell-1} D\left( \gamma_{j_\ell} \boldx^\bolda \otimes \epsilon_1 \wedge \epsilon_{j_2} \wedge \cdots  \wedge \widehat{\epsilon_{j_\ell}} \wedge \cdots \wedge \epsilon_{j_m} 
\right)\\
&= D\left(
 \boldx^{\bolda+e_{i_0}} \otimes \epsilon_{j_2} \wedge \cdots \wedge \epsilon_{j_m}
\right)
=\boldx^{\bolda} \otimes \epsilon_1 \wedge \epsilon_{j_2} \wedge \cdots \wedge \epsilon_{j_m}
\end{aligned}
$$
where in the first line, the terms in the summation on $\ell$ all vanish again because $\epsilon_1 \wedge \epsilon_1=0$.

Thus we have checked $\partial D + D \partial$ fixes
each basis element $\left(\k[\Delta] \otimes_A \K\right)_\boldb$, showing acyclicity.

\vskip.1in
\noindent
{\sf Case 2.} The multidegree $\boldb$ lies  in $\{0,1\}^d$, so
$\boldb =\sum_{j \in S} \epsilon_j$ for some $S \subseteq [d].$

We wish to identify 
$
\tilde{H}^{\#S-1-m}(\Delta|_S,\k) 
\cong
\Tor^A_m(\k[\Delta],\k)_\boldb,
$
by exhibiting a chain complex isomorphism
\begin{equation}
\label{chain-complex-isomorphism}
\tilde{C}(\Delta|_S,\k) \overset{\varphi}{\longrightarrow}
\left(\k[\Delta] \otimes_A \K \right)_\boldb
\end{equation}
where $\tilde{C}(\Delta|_S,\k)$ is an augmented simplicial cochain
complex computing (reduced) cohomology $\tilde{H}(\Delta|_S,\k)$.

We recall one way to set up this complex, by 
first fixing a total order $\prec$ on the color set $S$.
This allows one to define a {\it sign} 
$\sgn(s_1,\ldots,s_p) \in \{\pm 1\}$ 
for any ordered $p$-subset of $S$, as
the sign of the permutation that
sorts $(s_1,\ldots,s_p)$ into its $\prec$-order.  Then since each $(p-1)$-dimensional
face $F=\{i_1,\ldots,i_p\}$ in $\Delta$ has at most one vertex of each color in $S$, one can reindex so that
$\kappa(i_1) \prec \cdots \prec \kappa(i_p)$, and
choose a $\k$-basis element $[i_1,\ldots,i_p]^*$ 
within the oriented cochains $\tilde{C}^{p-1}(\Delta|_S,\k)$
corresponding to the face $F$;  these cochains $\{[i_1,\ldots,i_p]^*\}$ are the dual basis elements to the oriented simplices $\{ [i_1,\ldots,i_p] \}$ that form a basis for the simplicial chains $\tilde{C}_{p-1}(\Delta|_S,\k)$.  To express the simplicial coboundary map
$
\tilde{C}^{p-1}(\Delta|_S,\k) \overset{\delta}{\longrightarrow}
\tilde{C}^p(\Delta|_S,\k),
$
adopt the sign orientation convention that 
$$
[i_{\sigma_1},\ldots,i_{\sigma_p}]^*
=\sgn(\sigma) \cdot  [i_1,\ldots,i_p]^*
$$
for any permutation $\sigma \in \mathfrak{S}_p$,
and then the  coboundary map looks as follows:
$$
\delta[i_1,\ldots,i_p]^*
=\sum_{\substack{i \in [n]:\\\kappa(i) \in S, \\ F \cup \{i\} \in \Delta}} 
\sgn( \kappa(i_1), \ldots, \kappa(i_p), \kappa(i)) \cdot
[i_1,\ldots,i_p,i]^*.
$$
If $S \setminus F=\{j_1,\ldots,j_m\}$ with $j_1 \prec \cdots \prec j_m$ in the ordering on $S$,
then this can be re-expressed as
\begin{equation}
    \label{re-expressed-simplicial-coboundary-map}
\delta[i_1,\ldots,i_p]^*
:=\sum_{\ell=1}^m \quad
 \sum_{\substack{i \in [n]:\\ \kappa(i)=j_\ell, \\ F \cup \{i\} \in \Delta}} 
\sgn( \kappa(i_1), \ldots, \kappa(i_p), j_\ell) \cdot
[i_1,\ldots,i_p,i]^*.
\end{equation}
Having fixed these notations, one can define the isomorphism $\varphi$ from \eqref{chain-complex-isomorphism}
by mapping the basis as follows: 
\begin{equation}
\label{definition-of-cochain-isomorphism}
[i_1,\ldots,i_p]^* \quad
\overset{\varphi}{\longmapsto} \quad
\sgn( \kappa(i_1),\ldots,\kappa(i_p), j_1, \ldots, j_m) \cdot
x_{i_1} \cdots x_{i_p} \otimes \epsilon_{j_1} \wedge \cdots\wedge \epsilon_{j_m}
\end{equation}
Note the correspondence in homological degrees here:  the basis element on the left lies in $\tilde{C}^{p-1}(\Delta|_S,\k)$, and
maps to an element of $\k[\Delta] \otimes_A \K_m$, where $p=\#S-m$.
It is not hard to check from the conditions in \eqref{strand-basis-elements}
on the typical basis element $\boldx^\bolda \otimes \epsilon_{j_1} \wedge \cdots \wedge \epsilon_{j_m}$
that this map sends our chosen basis of $\tilde{C}(\Delta|_S,\k)$
to the basis of $\left(\k[\Delta] \otimes_A \K \right)_\boldb$,
so it is a $\k$-vector space isomorphism.
To check that it is an isomorphism of complexes, note the action of the differential
\eqref{Koszul-differential} on 
$x_{i_1} \cdots x_{i_p} \otimes \epsilon_{j_1} \wedge \cdots \wedge \epsilon_{j_m}$
is as follows:
$$
\begin{aligned}
\partial\left(
 x_{i_1} \cdots x_{i_p} \otimes \epsilon_{j_1} \wedge \cdots \wedge \epsilon_{j_m}
\right)
& = \sum_{\ell=1}^m (-1)^{\ell-1} \gamma_{j_\ell}  x_{i_1} \cdots x_{i_p} \otimes 
       \epsilon_{j_1} \wedge \cdots \wedge \widehat{\epsilon_{j_\ell}} \wedge \cdots \wedge \epsilon_{j_m} \\
& = \sum_{\ell=1}^m (-1)^{\ell-1} 
 \sum_{\substack{i \in [n]:\\ \kappa(i)=j_\ell, \\ F \cup \{i\} \in \Delta}} 
x_{i_1} \cdots x_{i_p} x_i \otimes 
       \epsilon_{j_1} \wedge \cdots \wedge \widehat{\epsilon_{j_\ell}} \wedge \cdots \wedge \epsilon_{j_m}
\end{aligned}
$$
Comparing this last expression with the image of the right side of 
\eqref{re-expressed-simplicial-coboundary-map}
under the isomorphism $\varphi$ described in \eqref{definition-of-cochain-isomorphism},
we see that they are equal using the following equality for $\ell=1,2,\ldots,m$:
$$
\sgn(\kappa(i_1),\ldots,\kappa(i_p),j_1,\ldots,j_m) 
= (-1)^{\ell-1} \cdot 
\sgn(\kappa(i_1),\ldots,\kappa(i_p),j_\ell,j_1,\ldots,\widehat{j_\ell},\ldots,j_m).
\qedhere
$$
\end{proof}

\begin{remark}\rm
  \label{balanced-Cohen-Macaulay}
  When $\Delta$ is Cohen-Macaulay over $\k$ and has a 
  balanced $d$-coloring $\kappa$, then $\k[\Delta]$
  will be a free $A$-module, and $\Tor_m^A(\k[\Delta],\k)$ vanishes except for $m=0$.
  As in \eqref{Cohen-Macaulay-poset-simplification},
  one then has this interpretation for each $S$ and
  $\boldb=\sum_{j \in S} \epsilon_j\in \{0,1\}^d$:
  \begin{equation}
\label{balanced-Cohen-Macaulay-simplification}
[h^\kappa_S(\Delta)]=[\tilde{H}^{\#S-1}(\Delta|_S,\k)]
=\Tor_0^A(\k[\Delta],\k)_\boldb.
\end{equation}
This applies, for example, in each of Examples~\ref{Stanley-example-type-A}, ~\ref{Stanley-example-type-B}, and
~\ref{Athanasiadis-example}.

\end{remark}

\section{Simplicial posets and their face rings}
\label{simplicial-poset-section}

As mentioned in the Introduction, the second part of this paper
deals not only with Stanley-Reisner rings of simplicial complexes, but more generally 
with Stanley's {\it face rings} of {\it simplicial posets}, which we review here; see Stanley \cite{Stanley-fvec-hvec} or \cite[\S III.6]{Stanley-littlegreenbook} for
more background.

\begin{defn} \rm
A {\it simplicial poset} $P$ is a poset with (unique) bottom element $\varnothing$, in which every lower interval $[\varnothing,x]:=\{y \in P: \varnothing \leq_P y \leq_P x\}$ is isomorphic to a Boolean algebra.
\end{defn}

\begin{remark} \rm
Bj\"orner \cite[\S2.3]{Bjorner-CW-posets} 
called these {\it posets of Boolean type}, and Garsia and Stanton \cite{Garsia-Stanton} called them {\it Boolean complexes}.
\end{remark}

To each simplicial poset $P$ there is an associated regular $CW$-complex $\Delta$ that has $P$ as its poset of faces, with bottom element $\varnothing$ corresponding to the empty face.  For this reason, we will call
a typical element of $P$ by $F$. 
Stanley associated the following two rings to $\Delta$ or $P$.

\begin{defn} 
\label{Stanley-ring-definition}
\rm
Given a simplicial poset $P$ or its corresponding cell complex $\Delta$,
let $\k[\Delta]$ be the quotient of the polynomial
ring $\k[y_F]$ having a variable for each $F$ in $P$, by the ideal with these generators:
\begin{enumerate}
    \item[(a)] $y_F y_{F'}$ if $F, F'$ have no upper bounds in $P$, and
    \item[(b)] $y_F y_{F'}-y_{F \wedge F'} 
    \sum_{G} y_G$ where the sum is over all the minimal upper bounds $G$ for $F, F'$ in $P,$
    \item[(c)] $y_{\varnothing}-1$.
\end{enumerate}
Let $\tilde{\k}[\Delta]$ be the quotient of $\k[y_F]$
by only (a),(b) above, but not (c),
so that $\k[\Delta] = \tilde{\k}[\Delta]/(y_\varnothing -1)$.
In \cite{Stanley-fvec-hvec}, these two rings $\tilde{\k}[\Delta]$ and $\k[\Delta]$ are denoted
$\hat{A}_P$ and $A_P$.
\end{defn}

\begin{remark} \rm
  As pointed out by Stanley \cite[p. 323]{Stanley-fvec-hvec}, when $\Delta$ happens to be an abstract simplicial complex (equivalently, its
  face poset $P$ is a meet-semilattice), the map $y_F \longmapsto x^F:=\prod_{i \in F} x_i$
  induces a ring isomorphism between the face ring $\k[\Delta]$ just defined
  and the Stanley-Reisner ring of $\Delta$ defined earlier, also called $\k[\Delta]$.  Thus the two seemingly conflicting terminologies are actually compatible. Readers interested only in Stanley-Reisner rings can safely substitute $y_F=\boldx^F$ in all ensuing discussion.
\end{remark}

\begin{remark} \rm
Brun and R\"omer \cite[\S 4]{BrunRomer}
define an interesting extension of face rings $\k[\Delta]$ beyond simplicial posets, to what they call {\it locally distributive lattices}.
\end{remark}

\begin{example} \rm
\label{Stanley-ring-example-for-injective-words-on-[2]}
One of our motivating examples of a simplicial poset is the {\it poset $P$ of injective words on $[n]$} discussed in Example~\ref{Athanasiadis-example}.
Its associated regular CW-complex $\Delta$
is called the {\it complex of injective words on $[n]$},
shown here for $n=2$, along with the face poset $P$.

\begin{center}
\begin{tikzpicture}[scale=0.7]
\pgfmathsetlengthmacro{\rad}{25pt};
\pgfmathsetlengthmacro{\vertsize}{\rad*0.11};

\pgfmathsetmacro{\triangleopacity}{0.4};
\definecolor{dimtwocolor}{RGB}{140,140,200}
    \begin{scope}[shift={(-16.5,0)}]
        \node[] at (0,0){\Large $\Delta=$};
    \end{scope}
    
    \begin{scope}[shift={(-13.5,0)}]
    	\coordinate (1) at (0,1.3);
		\coordinate (2) at (0,-1.3);
		
		\coordinate (12) at (-1,0);
		\coordinate (21) at (1,0);
		
		\draw [very thick, draw=black] (1) to [bend left] (2);
		\draw [very thick, draw=black] (1) to [bend right] (2);
		
		\draw [black, fill=black] (1) circle [radius=\vertsize];
		\draw [black, fill=black] (2) circle [radius=\vertsize];
		
		\node[yshift=3*\vertsize]  at (1)     {\small $1$};
		\node[yshift=-3*\vertsize] at (2)     {\small $2$};
		\node                      at (12)    {\small $12$};
		\node                      at (21)    {\small $21$};
    \end{scope}
    
    \begin{scope}[shift={(-10.5,0)}]
        \node (b) at (-1,0) {};
        \node (e) at (1.5,0)  {};
        \draw[thick, draw=black, ->] 
        (b) 
        edgenode[yshift=5mm]{
            $\substack{
                \text{face}\\ 
                \text{poset}
                }$
            }
        (e);
    \end{scope}
    
    \begin{scope}[shift={(-4.5,0)}]
        \node[] at (-3,0){\Large $P=$};
    
		\node (emptyset) at (0,-1.5) {$\varnothing$};
		
		\node (1)  at (-1,0) {\small $1$};
		\node (2)  at (1,0)  {\small $2$};	
		\node (12) at (-1,2) {\small $12$};		
		\node (21) at (1,2)  {\small $21$};		
		
		\draw [very thick, draw=black] (emptyset) -- (1) -- (12);
		\draw [very thick, draw=black] (1) -- (21);
		\draw [very thick, draw=black] (2) -- (12);
	    \draw [very thick, draw=black] (emptyset) -- (2) -- (21);
    \end{scope}
\end{tikzpicture}
\end{center}

Here one has ring presentations
\addtolength{\jot}{3mm}
\begin{align*}
\tilde{\k}[\Delta]
&=\k[y_{\varnothing},y_1,y_2,y_{12},y_{21}]/(y_{12}y_{21}\,\, , \,\,  y_1y_2-y_{\varnothing} (y_{12}+y_{21})) \\
\k[\Delta]
&=\tilde{\k}[\Delta]/(y_{\varnothing}-1) 
\cong \k[y_1,y_2,y_{12},y_{21}]/(y_{12}y_{21} \,\, , \,\, y_1y_2-(y_{12}+y_{21}))
\end{align*}
\end{example}

\subsection{Two gradings}

There are two kinds of gradings of $\k[\Delta]$ that
will play a key role.  The first is an $\N$-graded ring structure
employed by Stanley.

\begin{defn} \rm
\label{single-grading-as-ring}
 ({\it $\N$-grading as a ring})
One can define \cite[p. 325]{Stanley-fvec-hvec} an
$\N$-grading on the polynomial algebra $\k[y_F]$ by decreeing
$\deg_\N(y_F)$ to be the {\it rank} $\rho(F)$ of the Boolean interval
$[\varnothing,F]$ in $P$, that is, 
$$
\deg_{\N}(y_F):=\rho(F)=1+\dim(F),
$$
when $F$ is regarded as a face of the cell complex $\Delta$.
It is not hard to check from the relations (a),(b),(c) that this
$\N$-grading descends to one on the quotient ring
$\tilde{\k}[\Delta]$, in which the degree $0$
component consists of the subalgebra $\k[y_\varnothing]$
generated by $y_\varnothing$.  This then descends to an
$\N$-grading on the further quotient, the face ring $\k[\Delta]$, where one 
sets $y_\varnothing=1$, in 
which the degree $0$ component is the field $\k$.
\end{defn}

The second grading on the face ring $\k[\Delta]$ 
is related to De Concini, Eisenbud and Procesi's
theory of {\it Hodge algebras} or {\it algebra with straightening law (ASL)} on the poset $P$, defined in \cite{Hodge-algebras}.
Stanley observed that $\tilde{\k}[\Delta]$ is an ASL
on the simplicial poset $P$ which is the face poset of the cell complex $\Delta$.
He showed that this leads to 
{\it standard monomial} bases for
the two rings:
\begin{itemize}
    \item The ring $\tilde{\k}[\Delta]$ has
    as $\k$-basis the monomials 
    $
    \{y_{F_1}^{a_1} \cdots y_{F_\ell}^{a_\ell}: \text{ chains }
        y_{F_1} < \cdots < y_{F_\ell} \text{ in }P
        \}.
    $
    \item Its quotient the face ring $\k[\Delta]$ has as $\k$-basis 
        $
    \{y_{F_1}^{a_1} \cdots y_{F_\ell}^{a_\ell}: \text{ chains }
        y_{F_1} < \cdots < y_{F_\ell} \text{ in }P \setminus \varnothing
        \}.
    $
\end{itemize}

The standard monomial basis leads to the second kind of
grading for $\k[\Delta]$.
\begin{defn}
\label{vector-space-multigrading-on-Stanley-ring}
\rm
 ({\it $\N^d$-grading as a $k$-vector space})
Let $d:=\dim(\Delta)+1$.  Then decree in $\k[\Delta]$ that the $\N^d$-degree of $y_{F_1}^{a_1} \cdots y_{F_\ell}^{a_\ell}$ with 
$y_{F_1} < \cdots < y_{F_\ell}$ in $P \setminus \varnothing$
is the vector $\boldb:=\sum_{i=1}^\ell a_i \epsilon_{\rho(F_i)}$ in $\N^d$.
This gives a $\k$-vector space
decomposition (but {\it not} an $\N^d$-graded ring structure)
$$
\k[\Delta]=\bigoplus_{\boldb \in \N^d}
   \k[\Delta]_{\boldb}
$$
where 
$\k[\Delta]_{\boldb}$ is the $\k$-span of standard
monomials of $\N^d$-degree $\boldb$. Note that the {\it grading specialization}
\begin{equation}
\label{grading-specialization-map}
\begin{array}{rcl}
    \N^d &\longrightarrow &\N\\
    \epsilon_j & \longmapsto & j
    \end{array}
\end{equation}
specializes this $\k$-vector space $\N^d$-multigrading to the earlier
$\N$-grading as a ring.
\end{defn}

\noindent
{\bf Warning:} Unlike the $\N$-grading as a ring, small examples like the one below show that the vector space $\N^d$-grading on $\k[\Delta]$ just defined {\it does not respect} its ring multiplication.

\begin{example} \rm
\label{multigrading-is-not-ring-grading}
The complex $\Delta$ of injective words on $[2]$, considered
in Example~\ref{Stanley-ring-example-for-injective-words-on-[2]},
had this face ring:
$$
\k[\Delta]=
\k[y_1,y_2,y_{12},y_{21}]
/( y_{12} y_{21}, y_1 y_2 - (y_{12}+y_{21})).
$$
Using its $\N^2$-grading as a $\k$-vector space,
the element $\theta_1=:y_1+y_2$ is homogeneous 
with $\deg_{\N^2}(\theta_1)=\epsilon_1$.  However, its square
$$
\theta_1^2=y_1^2+2y_1 y_2+ y_2^2
=y_1^2+2(y_{12}+y_{21})+y_2^2
$$
is inhomogeneous for the $\N^2$-grading, assuming $\k$ does not have characteristic $2$, since
$$
\begin{aligned}
&\deg_{\N^2}(y_1^2)=
\deg_{\N^2}(y_2^2)=2\epsilon_1,\\
&\deg_{\N^2}(y_{12})=
\deg_{\N^2}(y_{21})=2\epsilon_2.
\end{aligned}
$$
\end{example}

\subsection{Comparison with the barycentric subdivision}
\label{subdivision-comparison-section}

For any simplicial poset $P$ with cell complex $\Delta$, there is a close relation
between its face ring $\k[\Delta]$ and the Stanley-Reisner ring $\k[\Sd \Delta]$
for the simplicial complex which is its {\it barycentric subdivision} $\Sd \Delta$,
that is, the {\it order complex} $\Delta( P \setminus \varnothing )$;
see Bj\"orner \cite{Bjorner-CW-posets} for more on the identification of $\Delta(P \setminus \varnothing)$ with  $\Sd \Delta$.  

If $\Delta$ has dimension $d-1$,
then $\Sd \Delta$ is a balanced complex with
vertex $d$-coloring $V=P \overset{\kappa}{\longrightarrow} [d]$
assigning $\kappa(F):=\rho(F)$.
One then has a $\k$-vector space (but not ring) isomorphism sending
\begin{equation}
    \label{Grobner-deformation-map}
    \begin{array}{ccc}
\k[\Delta] & \longrightarrow & \k[\Sd \Delta] \\
\Vert & & \Vert\\
\k[y_F]/J_\Delta & & \k[x_F]/I_{\Sd \Delta}
\end{array}
\end{equation}
where $x_F$ is the variable in the Stanley-Reisner ring 
$\k[\Sd \Delta]$ corresponding to the
barycenter vertex of the face $F$ in $\Delta$,
and $y_F$ is the variable of the face ring 
$\k[\Delta]$ associated to the face $F$ as in Definition~\ref{Stanley-ring-definition}. 
The isomorphism sends the $k$-basis elements
$\{ y_{F_1} y_{F_2} \cdots y_{F_\ell}\}$
of $\k[\Delta]$ indexed by multichains of 
faces $F_1 \leq F_2 \leq \cdots \leq F_\ell$ in $P \setminus \varnothing$ to
the corresponding $k$-basis elements
$\{x_{F_1} x_{F_2} \cdots x_{F_\ell}\}$
of $\k[\Sd \Delta]$.
This map also respects the two $\N^d$-multigradings, 
that is the one for
$\k[\Sd \Delta]$ that comes from its $d$-coloring as a balanced simplicial complex, and the one for
$\k[\Delta]$ from Definition~\ref{vector-space-multigrading-on-Stanley-ring}.

\begin{remark}
\label{Grobner-ASL-remark}
\rm
In fact, this vector space isomorphism  \eqref{Grobner-deformation-map}
is really a {\it Gr\"obner deformation} coming from an ASL structure, as we
now explain.  The face ring $\k[\Delta]$ does {\it not} satisfy the axioms given in \cite[\S1.1]{Hodge-algebras} to be an ASL on $P \setminus \varnothing$.
However, if one considers the {\it opposite} or {\it dual poset}
$P^\opp$ having the same underlying set but $F<_{P^\opp} F'$ if and only if
$F' <_P F$, then $\k[\Delta]$ {\it is} an ASL on $(P\setminus \varnothing)^\opp$ instead\footnote{The issue is as follows. When two incomparable faces $F, F'$ of $\Delta$ have $F \wedge F' =\varnothing$, Definition~\ref{Stanley-ring-definition}(b,c) leads
to a rewriting rule that says $y_F y_{F'}=\sum_G y_G$ where $G$ runs
over all minimal upper bounds for $F,F'$ in $P$.  The ASL axioms would require 
each term in that summation to be divisible by at least one $y_G$ with $G<F,F'$, rather than $G > F,F'$.}.
Since 
$$
\Sd \Delta \cong \Delta( P \setminus \varnothing) \cong \Delta( P \setminus \varnothing)^\opp,
$$
this implies that there is a term ordering on the polynomial rings $\k[y_F]$ and $\k[x_F]$
for which $I_{\Sd \Delta}$ is the initial ideal of $J_\Delta$;
see Conca and Varbaro \cite[\S3.1, Rem. 3.11]{Conca-Varbaro}.
In other words, the $\k$-linear map \eqref{Grobner-deformation-map} is a (square-free)
{\it Gr\"obner deformation}.
\end{remark}

Note that the group $G=\Aut(\Delta)$ of cellular automorphisms of the cell complex $\Delta$ corresponds to the poset automorphisms of $P$,
and color-preserving automorphisms for the balanced $d$-coloring
$\kappa$ of $\Sd \Delta$.  Consequently, $\k[\Sd \Delta]$
and $\k[\Delta]$ have the same (equivariant) $\N^d$-graded Hilbert series
in $\Groth_\k(G)[[t_1,\ldots,t_d]]$
\begin{equation}
    \label{complex-and-subdivision-have-same-hilb}
\begin{aligned}
\Hilb_{\equivariant}( \k[\Delta], \boldt)
=\Hilb_{\equivariant}( \k[\Sd \Delta], \boldt )
=\sum_{S \subseteq [d]} \frac{[f^\kappa_S(\Sd \Delta)] \cdot \boldt^S}{\prod_{j \in S}(1-t_j)} 
  =\frac{1}{\prod_{j=1}^d (1-t_j)}
   \sum_{S \subseteq [d]} [h^\kappa_S(\Sd \Delta)] \cdot \boldt^S,
 \end{aligned}
\end{equation}
where the last two expressions come from \eqref{equivariant-Hilb-expressions}.
Of course, the same holds if one specializes to $\N$-gradings,
for example via the map \eqref{grading-specialization-map}.

\begin{example} \rm
Each of Examples~\ref{Stanley-example-type-A}, \ref{Stanley-example-type-B},
\ref{Athanasiadis-example}
was an order complex $\Delta P$ for a
simplicial poset $P$ with some associated cell complex $\Delta$, with a large symmetry group 
$G=\Aut(\Delta)$:
\begin{itemize}
    \item In Example~\ref{Stanley-example-type-A},
$\Delta$ is an $(n-1)$-dimensional simplex.
\item In Example~\ref{Stanley-example-type-B},
$\Delta$ is the boundary of an $n$-dimensional cross-polytope.
\item In Example~\ref{Athanasiadis-example},
$\Delta$ is the complex of injective words on $[n]$.
\end{itemize}
Consequently in each case $\Delta P=\Sd \Delta$.
Furthermore, in each case $\Delta P$ and $\Delta$ were
Cohen-Macaulay over any field $\k$.  Thus when $\k$ has characteristic zero, since those examples computed explicit
expansions into the classes of simple $\k G$-modules for 
$
\sum_S [h^\kappa_S(\Sd \Delta) ] \boldt^S,
$
using \eqref{complex-and-subdivision-have-same-hilb} 
they also give us such expansions for
$\Hilb_\equivariant(\k[\Delta],\boldt)=
\Hilb_\equivariant(\k[\Sd \Delta],\boldt),
$
or for the $\N$-graded version $\Hilb_\equivariant(\k[\Delta],t)$
after specializing 
via \eqref{grading-specialization-map}.

Let us say a bit more about each example.
In Example~\ref{Stanley-example-type-A}, since $\Delta$
is an $(n-1)$-dimensional simplex, its face ring is simply
the polynomial ring $\k[\Delta]=\k[y_1,\ldots,y_n]$.  In this case, the resulting $\N^n$-graded equivariant Hilbert series \eqref{Stanley-Solomon-type-A-formula-hilb-consequence}
for $\k[y_1,\ldots,y_n]$ specializes to a formula
in $R_\k(S_n)[[t]]$ equivalent to
the well-known {\it Lusztig-Stanley fake-degree formula} in 
type $A$ from \cite[Prop. 4.11]{Stanley-invariants}:
$$
\Hilb_\equivariant( \k[y_1,\ldots,y_n], t)
=\frac{1}{(1-t)(1-t^2)\cdots(1-t^n)}
\sum_Q [\lambda(Q)] t^{\maj(Q)}.
$$
Here $Q$ in the sum runs over standard Young tableaux with $n$ cells,
and $\maj(Q):=\sum_{i \in \Des(Q)} i$.

In Example~\ref{Stanley-example-type-B}, where $\Delta$
is the boundary complex of an $n$-dimensional cross-polytope, one can check that its face ring is this Stanley-Reisner ring:
\begin{equation}
\label{cross-polytope-face-ring}
\k[\Delta]=k[x^+_1,x^-_1,\,\, x^+_2,x^-_2,\,\, \ldots,\,\, x^+_n,x^-_n]/
(\,\, x^+_i x^-_i\,\, )_{i=1,2,\ldots,n}
\end{equation}
Here the variables$\{x^+_1,x^-_1,\ldots,x^+_n,x^-_n\}$ correspond to the 
vertices $\{+e_1,-e_1,\ldots,+e_n,-e_n\}$ of the
cross-polytope, and an element 
$w$ in the hyperoctahedral group $B_n$
of all signed permutation matrices permutes the
variables just as it permutes the vertices.
In this case, the resulting $\N^n$-graded equivariant Hilbert series \eqref{Stanley-type-B-formula-hilb-consequence}  specializes to a $B_n$-equivariant Hilbert series 
for the cross-polytope Stanley-Reisner ring in \eqref{cross-polytope-face-ring} that appears to be new.

Lastly, in Example~\ref{Athanasiadis-example}, where $\Delta$
is the complex of injective words, specializing Athanasiadis's formula \eqref{Athanasiadis-formula} gives an $S_n$-equivariant description for the face ring $\k[\Delta]$, which was 
our original goal.
\end{example}

\section{Universal parameters and their depth-sensitivity}
\label{universal-parameters-section}

Recall that for a commutative $\k$-algebra $R$ 
of Krull dimension $d$, a {\it system of parameters} 
is a sequence of elements $\Theta=(\theta_1,\ldots,\theta_d)$
in $R$ for which the ring extension
$\k[\Theta]:=\k[\theta_1,\ldots,\theta_d] \hookrightarrow R$ is finite,
meaning that $R$ is finitely generated as a $\k[\Theta]$-module.

Stanley \cite[Lemma 3.9]{Stanley-fvec-hvec} proves that $\k[\Delta]$ is finitely generated as a module over the $\k$-subalgebra generated by its homogeneous component of degree one, and therefore will always contain {\it linear} systems of parameters.  
However, such linear systems of parameters are rarely stable under the symmetries $\Aut(\Delta)$.  Instead
we will work with the following universal parameters that
are invariant under symmetries.

\begin{defn} \rm
Given a simplicial poset $P$ and its associated cell complex $\Delta$, say of dimension $d-1$, call the {\it universal parameters} 
$
\Theta:=(\theta_1,\ldots,\theta_d)
$
the elements defined for $j=1,2,\ldots,d$ as follows: 
$$
\theta_j:=\sum_{\substack{F \in P:\\ \rho(F)=j}} y_F.
$$
In particular, when $\Delta$ is actually a simplicial complex,
so that $\k[\Delta]$ is its Stanley-Reisner ring, then
$$
\theta_j=\sum_{\substack{F \in \Delta:\\ \#F=j}} \boldx^F.
$$
\end{defn}

\begin{prop}
\label{elementary-theta-are-an-hsop}
For any simplicial poset $P$, these $\Theta$ form a system of parameters in $\k[\Delta]$.
\end{prop}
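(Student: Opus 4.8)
The plan is to prove that $\k[\Delta]/(\theta_1,\ldots,\theta_d)$ is finite-dimensional over $\k$; since $\k[\Delta]$ has Krull dimension $d=1+\dim\Delta$, this is exactly the assertion that $\Theta$ is a system of parameters (indeed a homogeneous one for the $\N$-grading-as-a-ring, in which each $\theta_j$ has degree $j$). Rather than argue inside $\k[\Delta]$ directly, I would exploit the Gr\"obner degeneration of Remark~\ref{Grobner-ASL-remark}, which relates $\k[\Delta]$ to the Stanley--Reisner ring $\k[\Sd\Delta]$ of the barycentric subdivision; under the $\k$-linear identification \eqref{Grobner-deformation-map} the element $\theta_j$ corresponds to the ``colorful'' linear form $\gamma_j:=\sum_{\rho(F)=j}x_F$ of $\k[\Sd\Delta]$.

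The first step is the balanced case. The complex $\Sd\Delta=\Delta(P\setminus\varnothing)$ is balanced of dimension $d-1$ with $d$-coloring $F\mapsto\rho(F)$, and I claim $\gamma_1,\ldots,\gamma_d$ is a linear system of parameters for $\k[\Sd\Delta]$. This follows from the standard facet criterion for linear forms to be a system of parameters for a Stanley--Reisner ring (see e.g.\ \cite[\S III.2]{Stanley-littlegreenbook}): a facet of $\Sd\Delta$ is a saturated chain $F_1\lessdot\cdots\lessdot F_k$ in $P\setminus\varnothing$, so its vertices carry the distinct colors $1,\ldots,k$, whence each $\gamma_j$ restricts on that facet to the single variable $x_{F_j}$ if $j\le k$ and to $0$ if $j>k$; the restrictions therefore generate the irrelevant ideal of the facet's coordinate subring. (No purity of $P$ is required.) Hence $\dim_\k\k[\Sd\Delta]/(\gamma_1,\ldots,\gamma_d)<\infty$.

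The second step transfers this through the degeneration. Assign the weight $\mathrm{wt}(y_F):=\rho(F)^2$ to the variables of the polynomial presentation of $\k[\Delta]$. In any straightening relation $y_Fy_{F'}=y_{F\wedge F'}\sum_G y_G$ one has $\rho(F)+\rho(F')=\rho(F\wedge F')+\rho(G)$ together with $\rho(F\wedge F')<\rho(F),\rho(F')<\rho(G)$ (the rank identity for minimal upper bounds in a simplicial poset), and strict convexity of $t\mapsto t^2$ then forces $\rho(F)^2+\rho(F')^2<\rho(F\wedge F')^2+\rho(G)^2$; likewise in the relations $y_Fy_{F'}=0$ and (when $F\wedge F'=\varnothing$) $y_Fy_{F'}=\sum_G y_G$ the monomial $y_Fy_{F'}$ is the strictly lowest-weight term. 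Thus the associated graded ring $\mathrm{gr}_{\mathrm{wt}}\k[\Delta]$ is a quotient of $\k[\Sd\Delta]$, with $I_{\Sd\Delta}$ lying in the defining ideal. Each $\theta_j$ is $\mathrm{wt}$-homogeneous of weight $j^2$, so its symbol is $\theta_j$ itself, mapping to $\gamma_j$; consequently $\mathrm{gr}_{\mathrm{wt}}\bigl(\k[\Delta]/(\Theta)\bigr)$ is a quotient of $\k[\Sd\Delta]/(\gamma_1,\ldots,\gamma_d)$. Since $\k[\Delta]/(\Theta)$ is homogeneous for the positive $\N$-grading-as-a-ring, within each $\N$-degree the weight filtration is a finite filtration of a finite-dimensional space, so
\[
\dim_\k\k[\Delta]/(\Theta)=\dim_\k\mathrm{gr}_{\mathrm{wt}}\bigl(\k[\Delta]/(\Theta)\bigr)\le\dim_\k\k[\Sd\Delta]/(\gamma_1,\ldots,\gamma_d)<\infty,
\]
which completes the argument.

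The main obstacle is organizing the second step correctly: one must degenerate along the genuine $\Z$-filtration by $\mathrm{wt}(y_F)=\rho(F)^2$ and \emph{not} along the $\N^d$-grading, which is not a ring grading (Example~\ref{multigrading-is-not-ring-grading}). The two points that make this work are the strict weight inequality above---immediate from the rank identity for minimal upper bounds---and the fact that $\k[\Delta]/(\Theta)$ is homogeneous for the positive $\N$-grading-as-a-ring, so that passing to $\mathrm{wt}$-associated-graded preserves the $\N$-graded Hilbert function; given those, the first step is routine.
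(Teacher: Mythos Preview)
Your argument is correct. The paper's own proof is a one-line citation of De Concini--Eisenbud--Procesi \cite[Thm.~6.3]{Hodge-algebras}, and what you have written is essentially an unpacking of that theorem in this special case: the ASL machinery proves that the ``rank-sum'' elements are parameters by degenerating the ASL to its discrete form (here $\k[\Sd\Delta]$), where the parameters become the obvious linear system of parameters for a balanced complex. Your weight $\rho(F)^2$ and the convexity inequality are exactly the device that makes the degeneration go in the right direction, and your observation that one only needs the containment $I_{\Sd\Delta}\subseteq \mathrm{in}_{\mathrm{wt}}(J_\Delta)$ (rather than equality) keeps the argument short. So the two approaches coincide in spirit; yours is more self-contained, while the paper's buys brevity by invoking the general ASL result.
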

\begin{proof}
As $\k[\Delta]$ is an ASL on $(P\setminus \varnothing)^\opp$, 
this is
De Concini, Eisenbud and Procesi's \cite[Thm. 6.3]{Hodge-algebras}.
\end{proof}

The universal parameters $\Theta=(\theta_1,\ldots,\theta_d)$ for Stanley-Reisner rings and face rings have already appeared repeatedly in the literature. 
We have followed Herzog and Moradi \cite[\S 3]{HerzogMoradi}
in calling them {\it universal};  they used this terminology 
in the case where $\Delta$ is a simplicial complex.
In this case, one may think of $\Theta$ as the (nonzero) images under $\k[\boldx] \twoheadrightarrow \k[\Delta]$ of the {\it elementary symmetric functions}
in the variables $x_1,\ldots,x_n$, which form a well-known 
system of parameters for $\k[\boldx]$.
The parameters $\Theta$ were also considered by D.E. Smith, whose result \cite[Cor. 6.5]{Smith} is a special case of
our next result, Theorem~\ref{depth-sensitivity-theorem}, removing
two extra hypotheses that he assumed:
\begin{itemize}
\item  $\Delta$ is a simplicial complex, not allowing for simplicial posets, and 
\item $\Delta$ is pure.
\end{itemize}

\begin{thm}
\label{depth-sensitivity-theorem}
  For any simplicial poset with cell complex $\Delta$,
  not necessarily pure, the depth of the face ring $\k[\Delta]$
  is detected by the universal parameters $\Theta$ as follows:
$$   
\depth \,  \k[\Delta] =\max\{\delta: (\theta_1,\theta_2,\ldots,\theta_\delta) \text{ forms a regular sequence on }\k[\Delta]\}.
$$
\end{thm}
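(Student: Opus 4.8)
The plan is to establish the easy inequality by hand, and then reduce the hard inequality, by way of the algebra-with-straightening-law (ASL) structure of $\k[\Delta]$, to the case of a balanced simplicial complex, where the colorful Hochster formula (Theorem~\ref{colorful-Hochster-formula}) applies. The inequality ``$\geq$'' is immediate: each $\theta_j$ lies in the graded maximal ideal, so any regular sequence $(\theta_1,\dots,\theta_\delta)$ forces $\depth\,\k[\Delta]\geq\delta$. For ``$\leq$'', I would use that $\k[\Delta]$ is an ASL on $(P\setminus\varnothing)^{\opp}$ (see Section~\ref{subdivision-comparison-section}): there is a weight vector $w$ on $\k[y_F]$ with $w(y_F)$ depending only on $\rho(F)$ through a strictly concave function, for which the straightening relations form a $w$-Gröbner basis and $\mathrm{in}_w(J_\Delta)=I_{\Sd\Delta}$ — here $\mathrm{in}_w(J_\Delta)\supseteq I_{\Sd\Delta}$ since the $w$-initial form of relation~(b) is the monomial $y_Fy_{F'}$, and the two ideals have equal Hilbert functions by \eqref{complex-and-subdivision-have-same-hilb}, forcing equality. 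The point is that each $\theta_j=\sum_{\rho(F)=j}y_F$ is \emph{$w$-homogeneous}, with initial form $\sum_{\rho(F)=j}x_F=\gamma_j$, the $j$-th colorful parameter of the balanced complex $\Sd\Delta$. Hence the Koszul complex $K(\theta_1,\dots,\theta_\delta;\k[\Delta])$ degenerates flatly to $K(\gamma_1,\dots,\gamma_\delta;\k[\Sd\Delta])$, and upper semicontinuity of Koszul homology in the family shows: if $(\gamma_1,\dots,\gamma_\delta)$ is a regular sequence on $\k[\Sd\Delta]$, then $(\theta_1,\dots,\theta_\delta)$ is one on $\k[\Delta]$. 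Since moreover $\depth\,\k[\Delta]=\depth\,\k[\Sd\Delta]$ (either because depth depends only on $|\Delta|=|\Sd\Delta|$, or by Conca--Varbaro \cite{Conca-Varbaro} applied to the square-free degeneration $J_\Delta\rightsquigarrow I_{\Sd\Delta}$), the theorem for $\k[\Delta]$ follows from the theorem for the balanced simplicial complex $\Sd\Delta$.

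It then remains to treat a balanced simplicial complex $\Sigma$ with coloring $\kappa\colon V\to[d]$, whose colorful parameters $\gamma_1,\dots,\gamma_d$ form a linear system of parameters for $A=\k[\Gamma]$. For this case the $\N^d$-multigrading does respect the ring structure, and I would argue in two steps. First, by induction on $\delta$ using the multigraded Koszul complex together with the ``$S$-local'' structure of the computation in the proof of Theorem~\ref{colorful-Hochster-formula}, show that $(\gamma_1,\dots,\gamma_\delta)$ is a regular sequence on $\k[\Sigma]$ if and only if $\Tor^A_m(\k[\Sigma],\k)_{\epsilon_S}=0$ for all $m\geq1$ and all $S\subseteq[\delta]$; by the colorful Hochster formula this says $\tilde H^i(\Sigma|_S,\k)=0$ for $i<\#S-1$ and all $S\subseteq[\delta]$, i.e. that the color-selected subcomplex $\Sigma|_{[\delta]}$ is Cohen--Macaulay over $\k$. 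Second, Auslander--Buchsbaum over $A$ gives $\depth\,\k[\Sigma]=d-\mathrm{pd}_A\k[\Sigma]=d-\max\{m:\Tor^A_m(\k[\Sigma],\k)\neq0\}$, and the colorful Hochster formula identifies the latter maximum with $\max_{S\subseteq[d]}\bigl(\#S-1-\min\{i:\tilde H^i(\Sigma|_S,\k)\neq0\}\bigr)$. Matching the two descriptions — and invoking monotonicity of the minimal $A$-free resolution (a nonzero $\Tor^A_m(\k[\Sigma],\k)_{\epsilon_S}$ forces a nonzero $\Tor^A_{m-1}(\k[\Sigma],\k)_{\epsilon_{S'}}$ for some $S'\subsetneq S$) — one concludes $\max\{\delta:\Sigma|_{[\delta]}\text{ is Cohen--Macaulay over }\k\}=\depth\,\k[\Sigma]$, which is exactly the desired equality for $\Sigma$.

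The main obstacle I expect is this last matching step: one must show that if some $\Sigma|_S$ with $S\subseteq[\delta]$ fails to be Cohen--Macaulay, then $\depth\,\k[\Sigma]\leq\delta-1$ — equivalently, that a ``bad'' color-selected subcomplex supported on an initial segment of colors can be enlarged, by adjoining the remaining colors $\delta+1,\dots,d$, to a color-selected subcomplex witnessing projective dimension at least $d-\delta+1$. This is precisely where Smith's argument used purity, and in the non-pure setting it seems to require a careful Mayer--Vietoris analysis of how $\tilde H^*(\Sigma|_{S\cup\{j\}})$ is built from $\tilde H^*(\Sigma|_S)$ and the links of the color-$j$ vertices. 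A secondary technical point is that $\k[\gamma_1,\dots,\gamma_\delta]$ is not the ring over which $\k[\Sigma]$ is module-finite, so the equivalence ``$(\gamma_1,\dots,\gamma_\delta)$ regular $\iff$ $\Tor^A_m(\k[\Sigma],\k)_{\epsilon_S}=0$ for $m\geq1,\ S\subseteq[\delta]$'' must be proved directly from the multigraded Koszul complex rather than via a black-box Auslander--Buchsbaum argument over that subring; and in the reduction step one must check that adjoining the $w$-homogeneous elements $\theta_1,\dots,\theta_\delta$ preserves the Gröbner-basis property, so that the relevant Koszul complexes genuinely degenerate.
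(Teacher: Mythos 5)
Your proposal takes a genuinely different route from the paper's proof, and unfortunately it has a real gap that the monotonicity argument you sketch does not close.

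The paper's proof does not pass through the colorful Hochster formula at all: it invokes Duval's theorem that $\depth\,\k[\Delta]=\max\{\delta:\Delta^{(\delta-1)}\text{ is Cohen--Macaulay}\}$ and then proves directly, by induction on $\#(\Delta\setminus\Delta^{(\delta-1)})$, that Cohen--Macaulayness of $\Delta^{(\delta-1)}$ forces $(\theta_1,\dots,\theta_\delta)$ to be regular. The inductive step removes one maximal face $F\in\Delta\setminus\Delta^{(\delta-1)}$ at a time, using the short exact sequence $0\to(y_F)\to\k[\Delta]\to\k[\hat\Delta]\to 0$ and the observation that the principal ideal $(y_F)$ is $A$-free because it is a copy of a polynomial ring acted on by a partial elementary symmetric system of parameters. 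That argument is self-contained and works uniformly for simplicial posets; it never needs the colorful Hochster formula or Gr\"obner degeneration.

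Your Gr\"obner-degeneration idea is appealing and matches the spirit of the paper's Remark on Conca--Varbaro, and your ``$\geq$'' direction is fine. The first real problem is that the reduction to $\Sd\Delta$ does not actually reduce the difficulty: $\Sd\Delta$ is balanced but inherits exactly the non-purity of $\Delta$, and a balanced simplicial complex is itself a simplicial poset, so you still owe a proof of the theorem in that case (and iterating the reduction to $\Sd(\Sd\Delta)$, etc., does not terminate). The second, sharper problem is the ``matching step,'' which you correctly flag as the main obstacle but then understate. What you need there is: if some $\Sd\Delta|_S$ with $S\subseteq[\delta]$ fails to be Cohen--Macaulay, i.e.\ $\Tor^A_m(\k[\Sd\Delta],\k)_{\epsilon_S}\neq0$ for some $m\geq1$, then $\mathrm{pd}_A\,\k[\Sd\Delta]\geq d-\delta+1$. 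The resolution monotonicity you invoke---a nonzero $\Tor_m$ in multidegree $\epsilon_S$ forces a nonzero $\Tor_{m-1}$ in some multidegree $\epsilon_{S'}$ with $S'\subsetneq S$---runs in the \emph{opposite} direction from what you need: it descends to smaller $S$ and smaller $m$, whereas you need to propagate a low-$m$, small-$S$ obstruction up to a large-$m$ obstruction in some $S''\supseteq S$. That propagation is genuinely nontrivial (indeed, this is exactly the non-pure phenomenon that D.E.\ Smith sidestepped by assuming purity, as you note), and without it your argument does not close. The auxiliary claims (that the Koszul complex degenerates flatly and that semicontinuity gives regularity of $\Theta$ from regularity of $\Gamma$, and that $\depth\,\k[\Delta]=\depth\,\k[\Sd\Delta]$ by Conca--Varbaro) are plausible and could be made rigorous, but they only matter once the balanced, non-pure case is actually settled, and that is precisely where the gap sits.
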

\begin{proof}
Since $\depth \,  \k[\Delta] $ is the length of the longest
regular sequence of elements in the irrelevant ideal $\k[\Delta]_+$, it will always be bounded below by
the right side in the theorem.  On the other hand,
Duval has shown \cite[Cor. 6.5]{Duval-thesis} that
for a simplicial poset $P$ with
cell complex $\Delta$, denoting its
{\it $i$-skeleton} $\Delta^{(i)}$, one has
$$
\depth \,  \k[\Delta] =\max\{\delta: \Delta^{(\delta-1)} \text{ is Cohen-Macaulay over }\k \}.
$$
The theorem would therefore follow after proving
the following assertion:
\begin{quote}
If $\Delta$ has $\Delta^{(\delta-1)}$ Cohen-Macaulay over $\k$, then $(\theta_1,\theta_2,\ldots,\theta_\delta)$ is a $\k[\Delta]$-regular sequence.
\end{quote}
We prove this assertion by induction on the
cardinality $\#\Delta \setminus \Delta^{(\delta-1)}$.
In the base case, $\Delta=\Delta^{(\delta-1)}$ 
is a Cohen-Macaulay complex and
$\k[\Delta]$ a Cohen-Macaulay ring, so the assertion follows from
Proposition~\ref{elementary-theta-are-an-hsop},
since every system of parameters forms a 
regular sequence.

In the inductive step, pick a {\it maximal} face $F$ in $\Delta \setminus \Delta^{(\delta-1)}$, and let $\hat{P}, \hat{\Delta}$ be the
simplicial poset and cell complex
obtained by removing $F$ from $P, \Delta$.
Maximality of $F$ gives an
exact sequence of $\k$-vector spaces
\begin{equation}
\label{ses-for-depth-sensitivity}
0 \rightarrow (y_F) \longrightarrow \k[\Delta]
\longrightarrow \k[\hat{\Delta}] \rightarrow 0
\end{equation}
where $(y_F)$ is the principal ideal
of $\k[\Delta]$ generated by $y_F$.
Letting $A:=\k[z_1,z_2,\ldots,z_\delta]$,
one can check that 
\eqref{ses-for-depth-sensitivity}
is also a short exact sequence of $A$-modules in
which  $z_i$ acts
\begin{itemize}
\item on $\k[\Delta]$ and on $(y_F)$
as multiplication by $\theta_i$,
and 
\item on $\k[\hat{\Delta}]$ as multiplication by
$
\hat{\theta}_i:=\sum_G y_G,
$
with the sum  over elements $G$ in $\hat{P}$ having $\rho(G)=i$.
\end{itemize}
We wish to show that $\k[\Delta]$ is a free $A$-module,
since in this graded setting, it is equivalent to
$(\theta_1,\theta_2,\ldots,\theta_\delta)$ forming a 
$\k[\Delta]$-regular sequence.
By induction, $\k[\hat{\Delta}]$
is free as an $A$-module. 
Since \eqref{ses-for-depth-sensitivity} is short exact, using 
a standard fact about regular sequences \cite[Lem. 6.3]{Smith}, \cite[p. 103, Exer. 14]{Kaplansky}, it
suffices
to check that $(y_F)$ is free as an $A$-module.

Assume $F$ has vertex variables
$y_1,\ldots,y_m$, so $m \geq \delta$.
Since $F$ is a maximal face of $\Delta$,
in $\k[\Delta]$ one has
$$
y_F\cdot y_G=
\begin{cases}
0 &\text{ if }G\text{ is not a subface of }F,\\
y_F \cdot \prod_{i \in G} y_i &\text{ if }G\text{ is a subface of }F.\\
\end{cases}
$$
Consequently, the $\k$-linear map defined by 
$$
\begin{array}{rcl}
\k[\mathbf{x}]:=\k[x_1,\ldots,x_m]&  \longrightarrow&  (y_F)\\
x_1^{a_1} \cdots x_m^{a_m} & 
\longmapsto & y_F \cdot y_1^{a_1} \cdots y_m^{a_m}
\end{array}
$$
is an isomorphism of $\k$-vector spaces.  It is also an isomorphism of $A$-modules if
one lets $z_i$ act on $\k[\mathbf{x}]$
via multiplication by the $i^{th}$
elementary symmetric function $e_i(\mathbf{x}):=e_i(x_1,\ldots,x_m)$
for $i=1,2,\ldots,\delta$.  Since these are a subset
of the system of parameters $e_1(\mathbf{x}),\ldots,e_m(\mathbf{x})$ on the Cohen-Macaulay ring $\k[\mathbf{x}]$, then
$e_1(\mathbf{x}),\ldots,e_\delta(\mathbf{x})$ form a regular sequence, and $\k[\mathbf{x}]$
is free as an $A$-module.  Hence $(y_F)$ is also free as an $A$-module.
\end{proof}

\begin{remark} \rm
Results like Theorem~\ref{depth-sensitivity-theorem}
are reminiscent of the role played by $\Theta$ 
in the combinatorial topological
approach to {\it invariant theory} for subgroups $G$ of the symmetric group ${\mathfrak S}_n$ 
acting on $\Q[x_1,\ldots,x_n]$
pioneered by Garsia
and Stanton \cite{Garsia-Stanton}. 

From this viewpoint, Theorem~\ref{depth-sensitivity-theorem}
also fits with the ($q$-analogous) invariant theory 
for subgroups $G$ of the 
finite general linear groups $GL_n(\mathbb{F}_q)$
acting on $\F_q[x_1,\ldots,x_n]$.  
There, one has Landweber and Stong's {\it Depth Conjecture} \cite[p. 260]{Landweber-Stong} asserting that the depth of
the invariant ring $\F_q[x_1,\ldots,x_n]^G$ is similarly detected by the sequence of
{\it Dickson polynomials}, which are $GL_n(\mathbb{F}_q)$-invariant
polynomials $q$-analogous to the elementary symmetric functions.
It would be interesting to find  a closer link  
between these results.
\end{remark}

\begin{example} \rm
Theorem~\ref{depth-sensitivity-theorem} is tight in a certain sense, witnessed by the following family of examples; cf. \cite[Example 6.7]{Smith}.
For each $\delta,d$ with $1 \leq \delta \leq d$, define a
simplicial complex $\Delta(d,\delta)$ on $d+1$ vertices
$\{x_0,x_1,\ldots,x_{d+1}\}$ with two maximal faces, 
\begin{itemize}
    \item the larger maximal face $F_1=\{x_1,x_2,\ldots,x_d\}$ of dimension $d-1$, 
    \item the smaller maximal face
$F_2=\{x_0,x_1,x_2,\ldots,x_{\delta-1}\}$ of dimension $\delta-1$, 
\item with intersection
the $(\delta-2)$-face $F_1 \cap F_2=\{x_1,x_2,\ldots,x_{\delta-1}\}$.
\end{itemize}
Then $\Delta=\Delta(d,\delta)$
has $\k[\Delta]$ of Krull dimension $d$
and depth $\delta$.  Theorem~\ref{depth-sensitivity-theorem} shows
that $\theta_1,\ldots,\theta_\delta$ form a regular sequence.
One can check that each $\theta_j$ for $j=\delta+1,\delta+2,\ldots,d$
is a nonzero element of $\k[\Delta]$, but a zero-divisor, since these
$\theta_j$ are annihilated by multiplication with the (nonzero) element $x_0$.
Thus the {\it only}
 subsets of $\{\theta_1,\theta_2,\ldots,\theta_d\}$
which form $\k[\Delta]$-regular sequences are exactly the
subsets of $\{\theta_1,\theta_2,\ldots,\theta_{\delta}\}$.
\end{example}

\section{A conjecture on resolving over the
universal parameters}
\label{main-conjecture-section}

Given a simplicial poset $P$ with cell complex $\Delta$,
Proposition~\ref{elementary-theta-are-an-hsop}
shows that the face ring $\k[\Delta]$ is a finitely-generated module over the universal parameter ring
$\k[\Theta]=\k[\theta_1,\ldots,\theta_d]$.
It therefore makes sense to consider the minimal finite
free $k[\Theta]$-resolution of $\k[\Delta]$, and
compute $\Tor^{\k[\Theta]}(\k[\Delta],\k)$.

We should be slightly careful about the structures
carried by these objects.  Because the $\k[\Theta]$-module
structure on $\k[\Delta]$ comes from its ring structure,
it preserves the $\N$-grading on $\k[\Delta]$ as a ring
described in Definition~\ref{single-grading-as-ring},
assuming that $\deg(\theta_j):=j$ in $\k[\Theta]$,
as one would expect.  However, the $\k[\Theta]$-module
structure on $\k[\Delta]$ does {\it not} respect
the $\N^d$-grading as a $\k$-vector space
described in Definition~\ref{vector-space-multigrading-on-Stanley-ring}.  This has been illustrated already by small examples such as
Example~\ref{multigrading-is-not-ring-grading}, in which $\theta_1$ is homogeneous for the $\N^2$-grading, while
$\theta_1^2$ is inhomogeneous.

Hence we will only consider $\N$-graded free $k[\Theta]$-resolutions
of $\k[\Delta]$.  Also, note that 
each of the universal parameters $\theta_j$ is fixed by the group $\Aut(\Delta)$, and hence this
group action commutes with the $\k[\Theta]$-module structure on $\k[\Delta]$, preserving the $\N$-grading.  
Using Proposition~\ref{equivariant-resolution-prop}, one can produce a group equivariant free $k[\Theta]$-resolution, and $\Aut(\Delta)$ also
acts on each $\k$-vector space $\Tor_m^{\k[\Theta]}(\k[\Delta],\k)_j$.  

Conjecture~\ref{canonical-resolution} below describes
$\Tor^{\k[\Theta]}(\k[\Delta],\k)$ by comparing $\k[\Delta]$ 
with the Stanley-Reisner ring 
$\k[\Sd \Delta]$ 
for the barycentric subdivision,
as discussed in Section~\ref{subdivision-comparison-section}.
The $k$-vector space isomorphism
$\k[\Delta] \longrightarrow \k[\Sd \Delta]$
and Gr\"obner deformation in
  \eqref{Grobner-deformation-map}
  sends the {\it universal parameter ring}
  $$
  \k[\Theta]=\k[\theta_1,\ldots,\theta_d] \subset \k[\Delta]
  $$
  inside the face ring of $\Delta$
  to the {\it colorful parameter ring} $$\k[\Gamma]=\k[\gamma_1,\ldots,\gamma_d] \subset \k[\Sd \Delta]$$
  inside the Stanley-Reisner ring of $\Sd \Delta$,
  where the colorful parameters come from $\Sd \Delta = \Delta (P \setminus \varnothing)$ being a balanced $(d-1)$-dimensional
  simplicial complex.  
The colorful Hochster formula Theorem~\ref{colorful-Hochster-formula}
then describes the $\N^d$-graded vector space
 $\Tor^{\k[\Gamma]}(\k[\Sd \Delta],\k)$
 in an equivariant
 fashion, while Conjecture~\ref{canonical-resolution}
 specializes this to an $\N$-grading via the map in \eqref{grading-specialization-map}
 to describe $\Tor^{\k[\Theta]}(\k[\Delta],\k)$
 equivariantly.
 
\begin{conjecture}
\label{canonical-resolution}
For any simplicial poset with associated cell complex $\Delta$ of dimension $d-1$, and any subgroup $G$ of $\Aut(\Delta)$, for each $m=0,1,\ldots,d$ one has
these equalities in $R_\k(G)$:
$$
\left[ \Tor_m^{\k[\Theta]}(\k[\Delta],\k)_j \right]
= \left[ \Tor_m^{\k[\Gamma]}(\k[\Sd\Delta],\k)_j \right] 
=
\sum_{\substack{S\subseteq [d]:\\ j=\sum_{s\in S} s}}
\left[ \tilde{H}^{\#S-m-1}\left( \left( \Sd\Delta \right)|_S , \k\right) \right]
$$
Equivalently, one has this equality in $R_\k(G)[[t]]$:
\begin{equation}
    \label{canonical-resolution-rephrased}
\Hilb_\equivariant( \Tor_m^{\k[\Theta]}(\k[\Delta],\k), t )
=
\Bigg[ \Hilb_\equivariant( \, \Tor_m^{\k[\Gamma]}(\k[\Sd\Delta],\k), \,\,  t_1,\ldots,t_d )
\Bigg]_{\substack{t_1=t\\t_2=t^2\\\vdots\\t_d=t^d}}.
\end{equation}
\end{conjecture}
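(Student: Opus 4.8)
The two displayed equalities will be handled separately. The second one is essentially already a theorem: since $\Sd\Delta=\Delta(P\setminus\varnothing)$ is a balanced $(d-1)$-dimensional simplicial complex with the coloring $\kappa(F):=\rho(F)$, the colorful Hochster formula (Theorem~\ref{colorful-Hochster-formula}) gives $\Tor_m^{\k[\Gamma]}(\k[\Sd\Delta],\k)_\boldb\cong\tilde{H}^{\#S-m-1}((\Sd\Delta)|_S,\k)$ for $\boldb=\sum_{s\in S}\epsilon_s$, equivariantly for any $G\subseteq\Aut_\kappa(\Sd\Delta)=\Aut(\Delta)$; collecting all $\boldb$ with $\sum_{s\in S}s=j$ under the grading specialization \eqref{grading-specialization-map} yields exactly the asserted description of $\Tor_m^{\k[\Gamma]}(\k[\Sd\Delta],\k)_j$. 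So the real content of the conjecture is the first equality: that the square-free Gr\"obner deformation \eqref{Grobner-deformation-map} preserves every $\N$-graded $\k[\Theta]$-Betti number.

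The plan for the first equality is to realize the Gr\"obner degeneration of Remark~\ref{Grobner-ASL-remark} explicitly and check that the universal parameters survive it. One verifies that a weight vector $w$ on $\k[y_F]$ depending only on rank, for instance $w(y_F):=-\rho(F)^2$, makes the square-free monomial $y_Fy_{F'}$ the $w$-initial term of every straightening relation (using $\rho(F)+\rho(F')=\rho(F\wedge F')+\rho(G)$ together with $\rho(F\wedge F')<\rho(F),\rho(F')<\rho(G)$), so $\mathrm{in}_w(J_\Delta)=I_{\Sd\Delta}$. Homogenizing $J_\Delta$ with respect to $w$ in a new variable $t$ produces an ideal $\tilde{J}\subseteq\k[y_F][t]$ with $\k[y_F][t]/\tilde{J}$ flat over $\k[t]$, fiber $\k[\Delta]$ at $t\neq0$ and fiber $\k[\Sd\Delta]$ at $t=0$; since $J_\Delta$ is already homogeneous for the rank grading, this family is graded for that $\N$-grading with $\deg t=0$. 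The crucial point is that each $\theta_j=\sum_{\rho(F)=j}y_F$ lies in $\k[y_F]\subseteq\k[y_F][t]$ and is homogeneous both for the rank grading (degree $j$) and for $w$ (weight $-j^2$, since every term has $\rho(F)=j$); hence $\theta_1,\dots,\theta_d$ act on every fiber, and at $t=0$ the class of $\theta_j$ is $\mathrm{in}_w(\theta_j)=\gamma_j$, no term being dropped. Thus $\k[y_F][t]/\tilde{J}$ is a finitely generated rank-graded $\k[t][\theta_1,\dots,\theta_d]$-module, flat over $\k[t]$, with generic fiber $\k[\Delta]$ over $\k[\Theta]$ and special fiber $\k[\Sd\Delta]$ over $\k[\Gamma]$, and standard upper-semicontinuity of graded Betti numbers in a flat family gives, for all $m,j$,
$$
\dim_\k\Tor_m^{\k[\Theta]}(\k[\Delta],\k)_j\ \le\ \dim_\k\Tor_m^{\k[\Gamma]}(\k[\Sd\Delta],\k)_j,
$$
with the analogous $\le$ between classes in $\Groth_\k(G)$ in characteristic $0$, by upper-semicontinuity of isotypic multiplicities. (This inequality is precisely the Conca--Varbaro phenomenon noted in the Remark after \eqref{conjecture-paraphrased}.) On the other hand, by Proposition~\ref{equivariant-resolution-prop} one has $\Hilb_\equivariant(\k[\Delta],t)=\Hilb(\k[\Theta],t)\sum_m(-1)^m\Hilb_\equivariant(\Tor_m^{\k[\Theta]}(\k[\Delta],\k),t)$ and likewise for $\Sd\Delta$ over $\k[\Gamma]$, while $\Hilb_\equivariant(\k[\Delta],t)=\Hilb_\equivariant(\k[\Sd\Delta],t)$ and $\Hilb(\k[\Theta],t)=\Hilb(\k[\Gamma],t)=\prod_{j=1}^d(1-t^j)^{-1}$ by \eqref{complex-and-subdivision-have-same-hilb} specialized via \eqref{grading-specialization-map}; extracting the coefficient of $t^j$ yields, for every $j$,
$$
\sum_m(-1)^m\left[\Tor_m^{\k[\Theta]}(\k[\Delta],\k)_j\right]\ =\ \sum_m(-1)^m\left[\Tor_m^{\k[\Gamma]}(\k[\Sd\Delta],\k)_j\right]
$$
in $\Groth_\k(G)$, and in particular the same identity for dimensions.

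The main obstacle is closing the gap between these two facts. Termwise inequality together with equality of the alternating sums does \emph{not} force termwise equality: one must rule out ``consecutive cancellations'' in the minimal $\k[\Theta]$-resolution of $\k[\Delta]$. Unlike the extremal-Betti-number situation settled by Conca and Varbaro, several subsets $S$ of different cardinalities contribute to a single $\N$-degree $j$ (for example $j=6$ receives contributions from $\#S\in\{1,2,3\}$), so no parity or degree restriction automatically excludes such cancellation. Two natural routes to finish suggest themselves: (i) prove the general statement that a square-free Gr\"obner deformation coming from an ASL preserves \emph{all} Betti numbers over the subalgebra generated by the rank-$j$ universal parameters --- a result about ASLs of independent interest; or (ii) construct the minimal $\k[\Theta]$-resolution of $\k[\Delta]$ directly, deforming the colorful cochain-complex resolution of $\k[\Sd\Delta]$ from the proof of Theorem~\ref{colorful-Hochster-formula} via the straightening laws. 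Route (ii) is the natural analogue of that proof, which computes $\Tor^{\k[\Gamma]}(\k[\Sd\Delta],\k)$ from the Koszul complex $\K$; but here the Koszul differential $\partial(u\otimes\epsilon_{j_1}\wedge\cdots\wedge\epsilon_{j_m})=\sum_{\ell=1}^m(-1)^{\ell-1}\theta_{j_\ell}u\otimes\epsilon_{j_1}\wedge\cdots\wedge\widehat{\epsilon_{j_\ell}}\wedge\cdots\wedge\epsilon_{j_m}$ no longer respects the $\N^d$-grading on $\k[\Delta]$ (the Warning after Definition~\ref{vector-space-multigrading-on-Stanley-ring}), since multiplying a standard monomial by $\theta_{j_\ell}$ triggers straightening; consequently the multigraded strands of $\k[\Delta]\otimes_{\k[\Theta]}\K$ are no longer the cochain complexes $\tilde{C}((\Sd\Delta)|_S,\k)$, and the comparison map used in the proof of Theorem~\ref{colorful-Hochster-formula} would need correction terms whose compatibility with $\partial$ is exactly what remains to be established.
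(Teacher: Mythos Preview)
The statement you were asked to prove is labeled \textbf{Conjecture}~\ref{canonical-resolution} in the paper, and the paper does \emph{not} prove it. What the paper does is exactly what you have done: it establishes the second equality via Theorem~\ref{colorful-Hochster-formula}, it proves the upper-semicontinuity inequality (Proposition~\ref{conj-inequality-is-right}), it shows the alternating-sum identity follows from matching Hilbert series (Proposition~\ref{hilb-prediction-correct-prop}), and it records the Cohen--Macaulay and one-dimensional cases as corollaries where the gap closes for trivial reasons. The paper explicitly leaves the general case open, noting (in the Remark after \eqref{conjecture-paraphrased}) that this goes beyond Conca--Varbaro because \emph{all} Betti numbers, not just extremal ones, would need to be preserved.

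Your write-up is therefore an accurate reconstruction of the state of affairs, and your diagnosis of the obstruction is correct: termwise $\le$ plus equality of alternating sums does not force termwise equality, because distinct subsets $S$ of different cardinalities can land in the same $\N$-degree $j$ and there is no a priori bar on consecutive cancellation. Your two suggested routes---a general ASL-preservation theorem for Betti numbers over the rank-parameter subalgebra, or a direct deformation of the Koszul/cochain identification---are reasonable, but neither is carried out, and the paper does not carry them out either. So your proposal is not a proof, but it is not supposed to be: the target is genuinely open, and you have recovered precisely the partial evidence the paper provides.
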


\begin{remark} \rm
When $\k G$ is semisimple,
the first line of equalities in the conjecture would be isomorphisms:
$$
\Tor_m^{\k[\Theta]}(\k[\Delta],\k)_j 
\cong \Tor_m^{\k[\Gamma]}(\k[\Sd\Delta],\k)_j 
=
\bigoplus_{\substack{S\subseteq [d]:\\ j=\sum_{s\in S} s}}
\tilde{H}^{\#S-m-1}\left( \left( \Sd\Delta \right)|_S , \k\right)
$$
This happens, e.g., if one ignores the group action by taking $G=\{1\}$, or more generally, when $\#G \in \k^\times$.
\end{remark}

\begin{remark} \rm
After posting a version of this paper to the {\tt arXiv}, S. Murai suggested the following question about an even stronger assertion than Conjecture~\ref{canonical-resolution}:

\begin{question} \rm 
    \label{Murai's-question}
    Regard the universal parameters $\Theta$ and the colorful parameters $\Gamma$ as generating the same subalgebra $A=\k[z_1,\ldots,z_d]$ of the polynomial ring $\k[y_F]_{\varnothing \neq F \in \Delta}$, where $z_i:=\sum_{\substack{F \in \Delta:\\\rho(F)=i}} y_F$. Does there exist an isomorphism of ($\N$-graded) $A$-modules
    $
    \k[\Delta] \cong \k[\Sd\Delta]?
    $
    Is there an equivariant isomorphism?
\end{question}
In all examples that we have checked so far, the answer is ``yes."
\end{remark}

\begin{example}\rm
The balanced simplicial complex considered in Example~\ref{running-example-one} is actually the barycentric subdivision $\Sd \Delta$ for this regular cell complex $\Delta$ coming from a simplicial poset:

\begin{center}
\begin{tikzpicture}[scale=0.5]
\pgfmathsetlengthmacro{\rad}{25pt};
\pgfmathsetlengthmacro{\vertsize}{\rad*0.1666};

\pgfmathsetmacro{\triangleopacity}{0.4};
\definecolor{dimtwocolor}{RGB}{140,140,200}
    
    \begin{scope}[shift={(-4.5,0)}]
        \node[] at (0,0){\Large $\Delta=$};
    \end{scope}
     
    \begin{scope}{[shift={(0,0)}]}
    
     	\coordinate (1) at (0,2.1);
		\coordinate (2) at (-1.9,-1.1);
		\coordinate (3) at (1.9,-1.1);
		
     	\coordinate (4) at (0,-1.1);
		\coordinate (5) at (.95,.5);
		\coordinate (6) at (-.95,.5);
		\coordinate (7) at (0,-2.5); 
		\coordinate (8) at (0,0); 
		
		\draw [very thick, draw=black, fill=dimtwocolor, fill opacity=\triangleopacity] (1) -- (6) -- (2) -- (4) -- (3) -- (5) -- cycle;

		\draw [very thick, draw=black] (2) to [bend right] (7) to [bend right] (3);

		\draw [black, fill=black] (1) circle [radius=\vertsize];
		\draw [black, fill=black] (2) circle [radius=\vertsize];
		\draw [black, fill=black] (3) circle [radius=\vertsize];
		
		\node[yshift=2*\vertsize]               at (1) {\small $y_1$};
		\node[xshift=-2*\vertsize]              at (2) {\small $y_2$};		
		\node[xshift=2*\vertsize]               at (3) {\small $y_3$};
		\node[yshift=-1.5*\vertsize]            at (4) {\small $y_4$};
		\node[xshift=2*\vertsize]               at (5) {\small $y_5$};		
		\node[xshift=-2*\vertsize]              at (6) {\small $y_6$};
		\node[yshift=-1.5*\vertsize]            at (7) {\small $y_7$};
		\node[]                                 at (8) {\small $y_8$};
    \end{scope}
\end{tikzpicture}
\end{center}

\noindent
We examine the free $\k[\Theta]$-resolution of $\k[\Delta]$,
where the universal parameter ring
$\k[\Theta]=\k[\theta_1,\theta_2,\theta_3]$,
has
$$
\theta_1 = y_1+y_2+y_3,\quad
\theta_2 = y_4+y_5+y_6+y_7, \quad
\theta_3 = y_8.
$$
\newpage
Here is the {\tt Macaulay2} output:
\begin{verbatim}
i1 : S = QQ[y_1..y_8, Degrees=>{1,1,1,2,2,2,2,3}];

i2 : IDelta = ideal(y_1*y_2-y_6, y_1*y_3-y_5, y_1*y_4-y_8, y_1*y_7, 
                    y_2*y_3-(y_4+y_7), y_2*y_5-y_8, y_3*y_6-y_8,
                    y_4*y_5-y_3*y_8,y_4*y_6-y_2*y_8, y_4*y_7, 
                    y_5*y_6-y_1*y_8, y_5*y_7, y_6*y_7, y_7*y_8);

i3 : phi = map(S, QQ[z_1..z_3,Degrees=>{1,2,3}], matrix{{y_1+y_2+y_3, y_4+y_5+y_6+y_7, y_8}});

i4 : betti res pushForward(phi, S^1/IDelta);

            0 1
o4 = total: 8 2
         0: 1 .
         1: 2 .
         2: 3 .
         3: 2 .
         4: . 1
         5: . 1
\end{verbatim}

\noindent
Conjecture~\ref{canonical-resolution} says this
could have been obtained from the 
equivariant $\N^3$-graded Betti table \eqref{equivariant-multigraded-Betti-table-example} for the $\k[\Gamma]$-resolution of $\k[\Sd \Delta]$ appearing in Example~\ref{example-revisited}.
One first applies the $\N^3 \rightarrow \N$ grading specialization map $t_i \mapsto t^i$ from \eqref{grading-specialization-map}, giving these
equivariant
descriptions for $[\Tor_i^{\k[\Theta]}(\k[\Delta],\k)_j]$
in $R_\k(G)=\Z[\epsilon]/(\epsilon^2-1)$:
$$
\begin{tabular}{|c|c|c|}\hline
$j$ & $[\Tor_0^A(\k[\Delta],\k)_j]$ & $[\Tor_1^A(\k[\Delta],\k)_j]$\\\hline\hline
$0$& $1$&  \\ \hline
$1$& $1+\epsilon$ & \\\hline
$2$& $2+\epsilon$ & \\\hline
$3$& $2\epsilon$ &  \\\hline
$4$& & \\\hline
$5$&   &  $1$ \\\hline
$6$&  & $\epsilon$  \\\hline
\end{tabular}
$$
Then applying the dimension homomorphism $\epsilon \mapsto 1$ from
\eqref{dimension-homomorphism} gives the above Betti table from {\tt Macaulay2}.
\end{example}

We close with various bits of evidence for
Conjecture~\ref{canonical-resolution}.

\begin{prop}
\label{hilb-prediction-correct-prop}
Conjecture~\ref{canonical-resolution} 
 predicts the correct $\N$-graded
equivariant Hilbert series for $\k[\Delta]$.
\end{prop}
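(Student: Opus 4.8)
The plan is to observe that the Betti numbers \emph{predicted} by Conjecture~\ref{canonical-resolution} are, by construction, nothing but the $\N^d$-graded colorful Hochster Betti numbers of $\k[\Sd\Delta]$ over $\k[\Gamma]$ pushed forward along the grading specialization $\epsilon_j\mapsto j$ of \eqref{grading-specialization-map}; the desired Hilbert-series identity is then just the $\N^d\to\N$ specialization of the Euler-characteristic formula \eqref{general-resolution-hilb-expression} applied to $\k[\Sd\Delta]$, combined with \eqref{complex-and-subdivision-have-same-hilb}. No computation of the actual module $\Tor^{\k[\Theta]}(\k[\Delta],\k)$ is needed.

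More precisely, first I would record that the value of $\Hilb_\equivariant(\Tor_m^{\k[\Theta]}(\k[\Delta],\k),t)$ predicted by Conjecture~\ref{canonical-resolution} is
$$\sum_{S\subseteq[d]} \left[\tilde{H}^{\#S-m-1}\big((\Sd\Delta)|_S,\k\big)\right]\, t^{\sum_{s\in S}s},$$
and that by the colorful Hochster formula Theorem~\ref{colorful-Hochster-formula} this is the image of $\Hilb_\equivariant(\Tor_m^{\k[\Gamma]}(\k[\Sd\Delta],\k),\boldt)$ under the substitution $t_j\mapsto t^j$. Next, since $\deg_\N\theta_j=j=\deg_\N\gamma_j$, the series $\Hilb(\k[\Theta],t)=\prod_{j=1}^d(1-t^j)^{-1}$ is exactly the image of $\Hilb(\k[\Gamma],\boldt)=\prod_{j=1}^d(1-t_j)^{-1}$ under the same substitution. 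Therefore the Hilbert series predicted by the conjecture, namely $\Hilb(\k[\Theta],t)\sum_{m=0}^d(-1)^m\Hilb_\equivariant(\Tor_m^{\k[\Theta]}(\k[\Delta],\k),t)$ with the conjectured $\Tor$'s inserted, is the $t_j\mapsto t^j$ specialization of
$$\Hilb(\k[\Gamma],\boldt)\sum_{m=0}^d(-1)^m\Hilb_\equivariant\big(\Tor_m^{\k[\Gamma]}(\k[\Sd\Delta],\k),\boldt\big),$$
which equals $\Hilb_\equivariant(\k[\Sd\Delta],\boldt)$ by \eqref{general-resolution-hilb-expression} of Proposition~\ref{equivariant-resolution-prop} (applied to the $\N^d$-graded $A=\k[\Gamma]$-module $\k[\Sd\Delta]$; this is exactly the computation already carried out in the remark following Theorem~\ref{colorful-Hochster-formula}). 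Finally, \eqref{complex-and-subdivision-have-same-hilb} gives $\Hilb_\equivariant(\k[\Sd\Delta],\boldt)=\Hilb_\equivariant(\k[\Delta],\boldt)$, whose $t_j\mapsto t^j$ specialization is by definition the $\N$-graded series $\Hilb_\equivariant(\k[\Delta],t)$; so the predicted and actual Hilbert series agree.

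There is no genuinely hard step: the statement is a formal consequence of Part~1's colorful Hochster formula together with \eqref{complex-and-subdivision-have-same-hilb}. The only point meriting care is the bookkeeping around the grading specialization \eqref{grading-specialization-map} — that the $\k$-vector space isomorphism $\k[\Delta]\cong\k[\Sd\Delta]$ of \eqref{Grobner-deformation-map} respects the two $\N^d$-gradings (which is what makes \eqref{complex-and-subdivision-have-same-hilb} hold), that $\k[\Theta]$ and $\k[\Gamma]$ carry matching $\N$-gradings, and that the group $G\subseteq\Aut(\Delta)$ fixes both parameter systems pointwise so that the equivariant formula \eqref{general-resolution-hilb-expression} applies verbatim on both sides. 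Because the argument only plugs the \emph{predicted} Betti numbers into the standard Hilbert-series recursion, it stops short of proving Conjecture~\ref{canonical-resolution} itself — it merely certifies that the conjecture is at least consistent with the known equivariant Hilbert series.
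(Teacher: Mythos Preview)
Your proposal is correct and follows essentially the same approach as the paper's proof: both specialize the $\N^d$-graded Hilbert-series identity \eqref{complex-and-subdivision-have-same-hilb} via $t_j\mapsto t^j$, note that $\k[\Theta]$ and $\k[\Gamma]$ have the same $\N$-graded Hilbert series $\prod_{j=1}^d(1-t^j)^{-1}$, and then observe that plugging the conjectured Betti numbers into the alternating-sum formula \eqref{general-resolution-hilb-expression} reproduces exactly that specialized identity. Your write-up is more detailed and explicitly invokes Theorem~\ref{colorful-Hochster-formula} where the paper just cites \eqref{canonical-resolution-rephrased}, but the logic is the same.
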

\begin{proof}
Applying the grading specialization $\N^d \rightarrow \N$ map
\eqref{grading-specialization-map} to the equality in  \eqref{complex-and-subdivision-have-same-hilb}
shows that 
\begin{equation}
\label{hilbs-specialize}
\Hilb_{\equivariant}(\k[\Delta],t)
=\left[\Hilb_{\equivariant}(\k[\Sd \Delta],\boldt)\right]_{t_j=t^j}
\end{equation}
On the other hand, since both $\k[\Theta]$ and $\k[\Gamma]$
have trivial $G$-action and the same $\N$-graded Hilbert series
$1 /(1-t)(1-t^2)\cdots (1-t^d)$,  then
by using Conjecture~\ref{canonical-resolution} in the form of \eqref{canonical-resolution-rephrased},
and taking an alternating sum on $m$
as in \eqref{general-resolution-hilb-expression}, one deduces this same
equality \eqref{hilbs-specialize} .
\end{proof}

\begin{cor}
\label{Cohen-Macaulay-corollary}
Conjecture~\ref{canonical-resolution} is correct when
$\k[\Delta]$ is Cohen-Macaulay.
\end{cor}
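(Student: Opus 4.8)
The plan is to exploit that, when $\k[\Delta]$ is Cohen--Macaulay, both $\k[\Delta]$ and the Stanley--Reisner ring $\k[\Sd\Delta]$ are \emph{free} modules over their respective parameter rings $\k[\Theta]$ and $\k[\Gamma]$, so that all three quantities in Conjecture~\ref{canonical-resolution} vanish for $m\geq 1$, and the remaining $m=0$ case reduces to matching (equivariant) Hilbert series.

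First I would record that, since $\k[\Delta]$ has Krull dimension $d$ and $\Theta$ is a homogeneous system of parameters by Proposition~\ref{elementary-theta-are-an-hsop}, Cohen--Macaulayness makes $\Theta$ a $\k[\Delta]$-regular sequence, so $\k[\Delta]$ is a free $\k[\Theta]$-module. Hence $\Tor_m^{\k[\Theta]}(\k[\Delta],\k)=0$ for $m\geq 1$, and, because $\Hilb(\k[\Theta],t)=\prod_{j=1}^d(1-t^j)^{-1}$ is a unit in $R_\k(G)[[t]]$, the Hilbert series formula \eqref{general-resolution-hilb-expression} gives
$$
\Hilb_\equivariant\!\big(\Tor_0^{\k[\Theta]}(\k[\Delta],\k),t\big)=\Hilb_\equivariant(\k[\Delta],t)\cdot\prod_{j=1}^d(1-t^j).
$$
No semisimplicity of $\k G$ is needed here, since $\Tor_0$ is a well-defined $\k G$-module and \eqref{general-resolution-hilb-expression} holds for any equivariant free resolution.

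Next I would argue that $\k[\Sd\Delta]$ is also Cohen--Macaulay. It is standard that $\k[\Delta]$ is Cohen--Macaulay over $\k$ if and only if the cell complex $\Delta$ is Cohen--Macaulay over $\k$ (Theorem~\ref{depth-sensitivity-theorem} with $\delta=d$, together with the usual characterization of Cohen--Macaulay simplicial posets), and since this is a topological condition on $|\Delta|=|\Sd\Delta|$ it is in turn equivalent to $\Sd\Delta$ being a Cohen--Macaulay simplicial complex over $\k$, i.e. to $\k[\Sd\Delta]$ being a Cohen--Macaulay ring. As $\Sd\Delta$ is balanced with the $d$-coloring $\kappa$ and $\Gamma$ is the corresponding colorful system of parameters, Remark~\ref{balanced-Cohen-Macaulay} then gives that $\k[\Sd\Delta]$ is $\k[\Gamma]$-free, so $\Tor_m^{\k[\Gamma]}(\k[\Sd\Delta],\k)=0$ for $m\geq1$; running the same Hilbert series computation in the $\N^d$-grading and using \eqref{equivariant-Hilb-expressions} yields
$$
\Hilb_\equivariant\!\big(\Tor_0^{\k[\Gamma]}(\k[\Sd\Delta],\k),\boldt\big)=\Hilb_\equivariant(\k[\Sd\Delta],\boldt)\cdot\prod_{j=1}^d(1-t_j)=\sum_{S\subseteq[d]}[h^\kappa_S(\Sd\Delta)]\,\boldt^S,
$$
with $[h^\kappa_S(\Sd\Delta)]=[\tilde H^{\#S-1}((\Sd\Delta)|_S,\k)]$ by \eqref{balanced-Cohen-Macaulay-simplification} and $\tilde H^{\#S-m-1}((\Sd\Delta)|_S,\k)=0$ for $m\geq1$ (Theorem~\ref{colorful-Hochster-formula}).

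Finally I would assemble the three terms. For $m\geq1$ all three vanish by the above. For $m=0$, the right-hand side of Conjecture~\ref{canonical-resolution} is $\sum_{S:\,j=\sum_{s\in S}s}[h^\kappa_S(\Sd\Delta)]$, which by the last display is exactly the coefficient of $t^j$ in $\big[\Hilb_\equivariant(\Tor_0^{\k[\Gamma]}(\k[\Sd\Delta],\k),\boldt)\big]_{t_j=t^j}$; this is the middle $=$ right equality. The middle $=$ left equality at $m=0$ then follows by comparing the two displayed Hilbert series identities, using $\Hilb_\equivariant(\k[\Delta],t)=\big[\Hilb_\equivariant(\k[\Sd\Delta],\boldt)\big]_{t_j=t^j}$ from \eqref{complex-and-subdivision-have-same-hilb} and $\big[\prod_j(1-t_j)\big]_{t_j=t^j}=\prod_j(1-t^j)$. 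The only ingredient not already in the paper is the passage of Cohen--Macaulayness from $\k[\Delta]$ to $\k[\Sd\Delta]$ in the third paragraph; I expect this to be the main point, though it is routine given the topological invariance of Cohen--Macaulayness, and everything else is formal bookkeeping with equivariant Hilbert series.
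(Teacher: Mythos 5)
Your proposal takes essentially the same route as the paper: $\k[\Delta]$ Cohen--Macaulay means $\Theta$ is a regular sequence, so $\k[\Delta]$ is $\k[\Theta]$-free and only $\Tor_0$ survives, after which the conjecture collapses to the (known) Hilbert series identity \eqref{hilbs-specialize}. The one thing you spell out that the paper's very terse proof leaves implicit is that $\k[\Sd\Delta]$ must \emph{also} be Cohen--Macaulay (via topological invariance of the CM property for simplicial posets/complexes), which is what forces $\Tor_m^{\k[\Gamma]}(\k[\Sd\Delta],\k)$ to vanish for $m\geq 1$ and hence makes the right-hand side of the conjecture match term-by-term rather than only in an alternating sum; without that observation the reduction to \eqref{hilbs-specialize} would only identify the two sides up to the alternating sum over $m$. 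Making this explicit is a genuine (if small) improvement in rigor; otherwise the argument is the same formal bookkeeping with equivariant Hilbert series.
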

\begin{proof}
When $\k[\Delta]$ is Cohen-Macaulay, it is a free $\k[\Theta]$-module,
so only $\Tor_0^A(\k[\Delta],\k)$ is non-vanishing,
and the rephrased version \eqref{canonical-resolution-rephrased} of the conjecture
is equivalent to the known equation \eqref{hilbs-specialize}. 
\end{proof}

\begin{prop}
 Conjecture~\ref{canonical-resolution} is correct when $\Delta$ is a $1$-dimensional complex, that is, a graph
 with multiple edges allowed, but no self-loops.
\end{prop}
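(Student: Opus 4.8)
The plan is to exploit that here $d=\dim\Delta+1\le 2$. If $\Delta$ has no edges then $d\le 1$ and $\k[\Delta]=\k[x_v]/(x_vx_{v'})$ is Cohen--Macaulay, so Corollary~\ref{Cohen-Macaulay-corollary} applies; hence assume $\dim\Delta=1$, so $d=2$ and $\Delta$ is a loopless multigraph with vertex set $V$ and edge set $E$. Since the $0$-skeleton of any complex is Cohen--Macaulay, Theorem~\ref{depth-sensitivity-theorem} (via the theorem of Duval quoted in its proof) gives $\depth\k[\Delta]\ge 1$, so $\k[\Delta]$ has projective dimension $\le 1$ over $\k[\Theta]$ and $\Tor^{\k[\Theta]}_m(\k[\Delta],\k)=0$ for $m\ge 2$; this matches the prediction of Conjecture~\ref{canonical-resolution}, since for $S\subseteq[2]$ and $m\ge 2$ one always has $\#S-m-1<-1$, so $\tilde H^{\#S-m-1}((\Sd\Delta)|_S,\k)=0$. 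It remains to handle $m=0,1$, for any subgroup $G\le\Aut(\Delta)$.

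The main step is to compute $\Tor_0^{\k[\Theta]}(\k[\Delta],\k)=\k[\Delta]/\Theta\,\k[\Delta]$ as a graded $G$-representation, using the standard monomial basis $\{1\}\cup\{y_v^a\}_{a\ge 1}\cup\{y_e^a\}_{a\ge 1}\cup\{y_v^ay_e^b: v\in e,\ a,b\ge 1\}$ coming from the ASL structure. First I would record the straightening identities $y_v^a=\theta_1 y_v^{a-1}-\theta_2 y_v^{a-2}$ for $a\ge 3$, $y_e^a=\theta_2 y_e^{a-1}$ for $a\ge 2$, $y_v^ay_e^b=\theta_2 y_v^ay_e^{b-1}$ for $b\ge 2$, and $y_v^ay_e=\theta_1 y_v^{a-1}y_e-y_v^{a-2}y_e^2$ for $a\ge 2$; together these show that every standard monomial of $\N$-degree $\ge 4$, as well as $y_v^3$, lies in $\Theta\,\k[\Delta]$, so $\Tor_0$ is concentrated in degrees $0,1,2,3$. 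In degree $1$, $\k[\Delta]_1$ is the permutation module $\k V$ and $\Theta\,\k[\Delta]$ contributes only the trivial line $\k\theta_1$, giving $\k V/\triv\cong\tilde H^0((\Sd\Delta)|_{\{1\}},\k)$. In degree $2$, $\k[\Delta]_2=\k V\oplus\k E$ (the spans of the $y_v^2$ and the $y_e$), and the degree-$2$ part of $\Theta\,\k[\Delta]$ is the span of the $\theta_1y_v=y_v^2+\sum_{e\ni v}y_e$ together with $\theta_2=\sum_e y_e$, embedded so that the quotient is $\k E/\triv\cong\tilde H^0((\Sd\Delta)|_{\{2\}},\k)$. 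In degree $3$, write $\k[\Delta]_3=\k V\oplus W$ with $W=\mathrm{span}\{y_vy_e:v\in e\}$; because $\Sd\Delta$ is the bipartite graph on $V\sqcup E$ whose parts are its two color classes, no element of $G$ reverses an edge of $\Sd\Delta$, so $W$ is $G$-equivariantly the $1$-cochain space $\tilde C^1(\Sd\Delta,\k)$, while the relations from $\Theta$ inside $W$ are spanned by the $\theta_2y_v=\sum_{e\ni v}y_vy_e$ and the $\theta_1y_e=y_ay_e+y_by_e$ (for $e=\{a,b\}$), which are exactly $\pm\delta^0$ of the vertex cochains (the sign depending only on the color of the vertex). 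Hence the degree-$3$ part is $\tilde C^1(\Sd\Delta,\k)/\operatorname{im}\delta^0=\tilde H^1(\Sd\Delta,\k)=\tilde H^1((\Sd\Delta)|_{\{1,2\}},\k)$. All four values agree with the $m=0$ predictions obtained from the colorful Hochster formula (Theorem~\ref{colorful-Hochster-formula}) applied to the balanced complex $\Sd\Delta$ and specialized via $\epsilon_j\mapsto j$ as in \eqref{grading-specialization-map}.

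Finally $\Tor_1$ is forced by Hilbert series. By Proposition~\ref{equivariant-resolution-prop}, equation \eqref{general-resolution-hilb-expression}, together with $\Tor_{\ge 2}=0$, one has $\Hilb_\equivariant(\Tor_1^{\k[\Theta]}(\k[\Delta],\k),t)=\Hilb_\equivariant(\Tor_0^{\k[\Theta]}(\k[\Delta],\k),t)-(1-t)(1-t^2)\,\Hilb_\equivariant(\k[\Delta],t)$, and by \eqref{complex-and-subdivision-have-same-hilb} specialized through $t_j\mapsto t^j$ the last product equals $\sum_{S\subseteq[2]}[h^\kappa_S(\Sd\Delta)]\,t^{\sum_{s\in S}s}$. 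Substituting the $\Tor_0$-values just found, the values $[h^\kappa_S(\Sd\Delta)]$ read off the flag $f$-vector of $\Sd\Delta$, and the Euler--Poincar\'e relation $[\tilde H^1(\Sd\Delta,\k)]-[\tilde H^0(\Sd\Delta,\k)]=[\tilde C^1(\Sd\Delta,\k)]-[\tilde C^0(\Sd\Delta,\k)]+1$ in $R_\k(G)$, all terms cancel except $\Hilb_\equivariant(\Tor_1,t)=[\tilde H^0((\Sd\Delta)|_{\{1,2\}},\k)]\,t^3$, which is precisely the $m=1$ prediction. Combining the cases $m=0$, $m=1$, $m\ge 2$ yields both equalities of Conjecture~\ref{canonical-resolution} in this case (the second equality being Theorem~\ref{colorful-Hochster-formula} for $\k[\Sd\Delta]$ over $\k[\Gamma]$).

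The step I expect to be the main obstacle is the degree-$3$ computation of $\k[\Delta]/\Theta\,\k[\Delta]$: identifying the span of the products $y_vy_e$ ($v\in e$) with $\tilde C^1(\Sd\Delta,\k)$ and the relations contributed by $\Theta$ with $\operatorname{im}\delta^0$, all $G$-equivariantly. This works exactly because $\Sd\Delta$ is bipartite — equivalently, its coloring $\kappa$ is proper — which is what makes the unsigned vertex-incidence vectors coincide with $\pm\delta^0$ of cochains without breaking equivariance. The straightening identities above are routine once relations (a)--(c) are written out for a graph, but some bookkeeping is needed to check all three families of standard monomials (vertex powers, edge powers, mixed monomials) of degree $\ge 4$ fall into $\Theta\,\k[\Delta]$.
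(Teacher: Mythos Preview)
Your proposal is correct and follows essentially the same route as the paper's proof sketch: both reduce to computing $\Tor_0^{\k[\Theta]}(\k[\Delta],\k)=\k[\Delta]/(\Theta)$ in degrees $0,1,2,3$, identify these pieces with the appropriate reduced cohomologies of $(\Sd\Delta)|_S$, and then recover $\Tor_1$ from the equivariant Hilbert series relation, using that $\Tor_m$ vanishes for $m\ge 2$.

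The differences are in bookkeeping rather than strategy. Where the paper invokes the ASL generation result of De~Concini--Eisenbud--Procesi \cite[Thm.~6.3]{Hodge-algebras} to conclude that $\k[\Delta]/(\Theta)$ is spanned by the images of the strict-chain standard monomials $\{1\},\{y_v\},\{y_e\},\{y_vy_e\}$, you instead derive this directly via the explicit straightening identities $y_v^a=\theta_1y_v^{a-1}-\theta_2y_v^{a-2}$, $y_e^a=\theta_2y_e^{a-1}$, etc.; both arrive at the same spanning sets. Your justification that $\Tor_2=0$ (via $\depth\k[\Delta]\ge 1$ and Auslander--Buchsbaum) is more explicit than the paper's ``one knows,'' and your degree-$3$ identification of the relations $\theta_2y_v$, $\theta_1y_e$ with $\pm\delta^0$ on the bipartite graph $\Sd\Delta$ spells out exactly the isomorphism the paper omits.
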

\begin{proof}[Proof sketch.]
We omit the full details, which are slightly tedious.
Note that since $\Delta$ is a graph, so that $\k[\Theta]=\k[\theta_1,\theta_2]$, 
one knows that $\Tor^{\k[\Theta]}_m(\k[\Delta,\k)$ vanishes for $m \geq 2$.
Hence \eqref{general-resolution-hilb-expression} says here that
$$
\Hilb_\equivariant(\k[\Delta],t)
= \Hilb(\k[\Theta],t) \cdot \left( 
\Hilb_\equivariant(\Tor_0^{\k[\Theta]}(\k[\Delta],\k),t)
-\Hilb_\equivariant(\Tor_1^{\k[\Theta]}(\k[\Delta],\k),t)
\right).
$$
Since Proposition~\ref{hilb-prediction-correct-prop}
says Conjecture~\ref{canonical-resolution} correctly describes $\Hilb_\equivariant(\k[\Delta],t)$, it suffices to check
that the conjecture
correctly describes  $\Tor_0(\k[\Delta],\k)$, and then it must
also correctly describe $\Tor_1(\k[\Delta],\k)$.

Proceed by reformulating
$$
\Tor_0^{\k[\Theta]}(\k[\Delta],\k) \cong \k[\Delta]/(\Theta)=
\k[\Delta]/(\theta_1,\theta_2).
$$
One can then use a part of De Concini, Eisenbud and Procesi's
result \cite[Thm 6.3]{Hodge-algebras}: not only is
$\Theta$ a system of parameters for $\k[\Delta]$,
but $\k[\Delta]$ is generated as a $\k[\Theta]$-module by the
standard monomials $\{y_{F_1} y_{F_2} \cdots y_{F_\ell}\}$
in which $F_1 \lneqq F_2 \lneqq \cdots \lneqq F_\ell$,
that is, where the chain of faces $\{F_i\}_{i=1}^\ell$ has {\it no repeats}.
In particular, when $\Delta=(V,E)$ is a graph with vertices $V$ and
edges $E$, the homogeneous components $\left(\k[\Delta]/(\Theta)\right)_j$ for $j=0,1,2,3$ are $\k$-spanned, respectively by
the images of these sets of monomials
$$
\{1\}, \quad
\{y_v\}_{v\in V}, \quad
\{y_e\}_{e\in E}, \quad
\{y_v y_e\}_{\substack{v \in V, e \in E\\ v < e}},
$$
and $\left(\k[\Delta]/(\Theta)\right)_j=0$ for $j \geq 4$.
This lets one write down four equivariant isomorphisms (details omitted):
$$
\begin{aligned}
\tilde{H}^{-1}(\Sd \Delta|_{\varnothing},\k) &\cong \k \cong (\k[\Delta]/(\Theta))_0 \\
\tilde{H}^{0}(\Sd \Delta|_{\{1\}},\k)&\cong  (\k[\Delta]/(\Theta))_1 \\
 \tilde{H}^{0}(\Sd \Delta|_{\{2\}},\k) &\cong  (\k[\Delta]/(\Theta))_2 \\
\tilde{H}^{1}(\Sd \Delta|_{\{1,2\}},\k)  &\cong   (\k[\Delta]/(\Theta))_3
\end{aligned}
$$
These isomorphisms show
Conjecture~\ref{canonical-resolution} 
correctly describes 
$\Tor^{\k[\Theta]}_0(\k[\Delta],\k)$, completing the proof.
\end{proof}

\begin{prop}
\label{conj-inequality-is-right}
Ignoring group actions,
Conjecture~\ref{canonical-resolution}
gives a correct dimension upper bound:
$$
\dim_\k \Tor_m^{\k[\Theta]}(\k[\Delta],\k)_j
\leq  
\dim_\k \Tor_m^{\k[\Gamma]}(\k[\Sd \Delta],\k)_j.
$$
\end{prop}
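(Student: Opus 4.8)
The plan is to realize the vector-space isomorphism \eqref{Grobner-deformation-map} as a single flat one-parameter Gr\"obner degeneration which, crucially, carries the universal parameters $\Theta$ to the colorful parameters $\Gamma$, and then to invoke upper-semicontinuity of Betti numbers along that family. Write $S=\k[y_F]_{\varnothing\neq F\in\Delta}$, so $\k[\Delta]=S/J_\Delta$ and, identifying $y_F$ with $x_F$, $\k[\Sd\Delta]=S/I_{\Sd\Delta}$. First I would pin down the degeneration of Remark~\ref{Grobner-ASL-remark} with an explicit weight: assign $y_F$ the weight $\phi(\rho(F))$ for a strictly concave $\phi\colon\{0,\dots,d\}\to\N$, say $\phi(i)=(d+1)i-i^2$. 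Using the rank identity $\rho(F)+\rho(F')=\rho(F\wedge F')+\rho(G)$ for every minimal upper bound $G$ of incomparable faces $F,F'$ (the one used in the ASL literature, cf.\ \cite{Hodge-algebras,Conca-Varbaro}), strict concavity of $\phi$ forces the initial form of each straightening relation $y_Fy_{F'}-y_{F\wedge F'}\sum_G y_G$ (and of each relation $y_Fy_{F'}$ for $F,F'$ with no upper bound) to be its leading monomial, which lies in $I_{\Sd\Delta}$; since the straightening relations form a Gr\"obner basis and $S/J_\Delta$, $S/I_{\Sd\Delta}$ have equal Hilbert functions by \eqref{Grobner-deformation-map}, this gives $\mathrm{in}_w(J_\Delta)=I_{\Sd\Delta}$.

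The key observation, and the step I expect to require the most care, is that this particular weight makes the universal parameters homogeneous: $\theta_j$ lifts to $z_j:=\sum_{\rho(F)=j}y_F\in S$, and every monomial of $z_j$ has the same weight $\phi(j)$, so $\mathrm{in}_w(z_j)=z_j$. Hence $A:=\k[z_1,\dots,z_d]$ is a genuine polynomial subring of $S$ (the $z_j$ involve disjoint variable blocks) that is left unchanged by the degeneration. I would then form the homogenization family $R:=S[t]/\widetilde{J_\Delta}$ over $B:=\k[t]$: it is $B$-flat, it is a module over $A_B:=B[z_1,\dots,z_d]$ with $z_j$ acting via the (unhomogenized, because $w$-homogeneous) element $z_j$, and it is bigraded, by the ring $\N$-grading with $\deg y_F=\rho(F)$, $\deg z_j=j$, $\deg t=0$, and by the $\Z$-grading with $\deg_w y_F=\phi(\rho(F))$, $\deg_w z_j=\phi(j)$, $\deg_w t=1$. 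Specializing at $t=0$ and $t=1$ gives, as $A$-modules, $R/tR\iso\k[\Sd\Delta]$ with $z_j\mapsto\gamma_j$ and $R/(t-1)R\iso\k[\Delta]$ with $z_j\mapsto\theta_j$. This is precisely where the ``$\Theta\rightsquigarrow\Gamma$'' compatibility enters, and checking that the family and its $A_B$-module structure are set up correctly (in particular that the $z_j$ are the limits of themselves, so that the special fiber's $A$-action is the colorful one and not that of some other subring) is the heart of the argument; everything after this is routine.

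Finally I would run the standard semicontinuity argument. Take a finite bigraded $A_B$-free resolution $F_\bullet\to R$ and set $C_\bullet:=F_\bullet\otimes_{A_B}B$, a finite complex of finite-rank bigraded free $\k[t]$-modules. Since $R$ and the $F_i$ are $B$-flat and $t-a$ is a nonzerodivisor on $R$, base change shows $F_\bullet\otimes_{A_B}A$ (reducing mod $t-a$) is an $A$-free resolution of $R/(t-a)R$, whence $\dim_\k\Tor^A_m(\k[\Delta],\k)_j=\dim_\k H_m\!\left(C_{\bullet,j}\otimes_{\k[t]}\k[t]/(t-1)\right)$ and $\dim_\k\Tor^A_m(\k[\Sd\Delta],\k)_j=\dim_\k H_m\!\left(C_{\bullet,j}\otimes_{\k[t]}\k[t]/(t)\right)$ for every $m$ and every $\N$-degree $j$. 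For fixed $j$, $C_{\bullet,j}$ is a finite complex of finite-rank $\Z$-graded free $\k[t]$-modules with $\deg_w t=1>0$, so each $H_m(C_{\bullet,j})$ is a graded $\k[t]$-module whose torsion is $t$-power torsion; the universal coefficient sequence over the PID $\k[t]$ then gives $\dim_\k H_m(C_{\bullet,j}\otimes\k[t]/(t-1))=\beta_m$, the free rank of $H_m(C_{\bullet,j})$, while $\dim_\k H_m(C_{\bullet,j}\otimes\k[t]/(t))=\beta_m+\tau_m+\tau_{m-1}\geq\beta_m$, where $\tau_i$ counts the torsion summands of $H_i(C_{\bullet,j})$. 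Since $A$ maps isomorphically onto both $\k[\Theta]\subseteq\k[\Delta]$ and $\k[\Gamma]\subseteq\k[\Sd\Delta]$ (all three being $d$-variable polynomial rings, by Proposition~\ref{elementary-theta-are-an-hsop} and a Krull-dimension count), this yields the asserted inequality. The only delicate points are the rank identity in Step~1 and the $w$-homogeneity of the $z_j$ in Step~2; once these are in place the comparison of Tor-dimensions is the usual Gr\"obner-degeneration semicontinuity.
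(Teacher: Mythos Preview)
Your argument is correct and follows essentially the same approach as the paper's proof sketch: both adapt Herzog's semicontinuity argument \cite[Thm.~3.1]{Herzog} by resolving the homogenized family $\tilde{S}/\widetilde{J_\Delta}$ over the smaller polynomial subalgebra $\k[\Theta,t]=A_B$ rather than over $\tilde{S}$ itself. Your version is simply more explicit---you write down a concrete concave weight $\phi$ realizing the ASL degeneration of Remark~\ref{Grobner-ASL-remark}, you isolate the crucial point that each $z_j=\sum_{\rho(F)=j}y_F$ is $w$-homogeneous (so that the subalgebra $A$ survives the degeneration intact, which is exactly what makes the ``smaller subalgebra'' adaptation go through), and you spell out the final PID/universal-coefficients step that the paper leaves implicit in ``the rest proceeds as before.''
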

\begin{proof}[Proof sketch.]
This requires a variant on the proof of the standard fact (as in Herzog \cite[Thm. 3.1]{Herzog}) that for a polynomial ring $S$,
the graded Betti numbers in a minimal $S$-free resolution of a graded quotient $S/I$ can only increase under Gr\"obner deformations $S/J \rightarrow S/I$, like the map $\k[\Delta] \rightarrow \k[\Sd \Delta]$
in \eqref{Grobner-deformation-map}.  One needs a version
that allows for resolutions of $S/J, S/I$ over a smaller polynomial subalgebra
$\k[\Theta]=\k[\theta_1,\ldots,\theta_d] \subset S$.
To alter the proof of \cite[Thm. 3.1]{Herzog}, consider
$
\k[\Theta,t] \subset \tilde{S}:=S[t]
$
and a minimal graded free $\k[\Theta,t]$-resolution of $\tilde{S}/\tilde{J}$, rather than a free $\tilde{S}$-resolution.  The rest proceeds as before.
\end{proof}

\begin{remark} \rm
    Assuming that $\k G$ is semisimple, then
   Proposition~\ref{conj-inequality-is-right} can be strengthened
    to say that $\Tor_m^{\k[\Theta]}(\k[\Delta],\k)_j$ is a subquotient of $\Tor_m^{\k[\Gamma]}(\k[\Sd \Delta],\k)_j$ as a $\k G$-module (and hence, by semisimplicity, also a $k G$-submodule).  The proof requires further technicalities, so we omit it here. When $\k G$ is {\it not} semisimple, we do not know if it is always a
   subquotient.
\end{remark}

\begin{remark} \rm \
The {\tt Macaulay2} code used in the development of this paper is now available as the package  {\tt ResolutionsOfStanleyReisnerRings}, written by the first author.
\end{remark}

\section*{Acknowledgements}

The authors thank Nathan Nichols and Mahrud Sayrafi for help
with computations in {\tt Macaulay2}.  They also thank Patricia Klein for helpful discussions regarding reference \cite{Conca-Varbaro}, and thank Satoshi Murai for suggesting Question~\ref{Murai's-question}.  Lastly, they thank an anonymous referee for helpful suggestions.


\end{document}